\newtheorem{theorem}{Theorem}[section]
\newtheorem{lemma}[theorem]{Lemma}
\newtheorem{proposition}[theorem]{Proposition}
\newtheorem{corollary}[theorem]{Corollary}
\newtheorem*{theoremA}{Theorem A}
\newtheorem*{theoremB}{Theorem B}
\newtheorem*{conjectureC}{Conjecture C}
\theoremstyle{definition}
\newtheorem{example}[theorem]{Example}
\newtheorem*{notation}{Notation}
\theoremstyle{remark}
\numberwithin{equation}{section}
\newcommand{\ZZ}{{\mathbb Z}}
\newcommand{\RR}{{\mathbb R}}
\newcommand{\la}{\langle}
\newcommand{\ra}{\rangle}
\newcommand{\Irr}{{\mathrm {Irr}}}
\newcommand{\oo}{{\mathbf O}}
\newcommand{\Cen}{{\mathbf C}}
\newcommand{\Center}{{\mathrm {Z}}}
\newcommand{\Syl}{{\mathrm {Syl}}}
\newcommand{\AG}{{\mathcal {G}}}
\newcommand{\T}{{\mathcal {T}}}
\newcommand{\WT}{{\tilde{\mathcal {T}}}}
\newcommand{\Norm}{{\mathbf N}}
\begin{document}
\title[Real class sizes]{Groups with some arithmetic conditions on\\ real class sizes}
\author{Hung P. Tong-Viet}
\email{Tongviet@ukzn.ac.za}
\address{School of Mathematics, Statistics and Computer Science\\University of KwaZulu-Natal\\Pietermaritzburg 3209 South Africa}
\subjclass[1991]{Primary 20E45, Secondary 20D10, 20D05} \keywords{real class sizes; real conjugacy
classes; solvable}
\thanks{This research is supported by a Startup Research Fund from the College of Agriculture, Engineering and Science, the University of KwaZulu-Natal}
\date{\today}

\begin{abstract}
Let $G$ be a finite group. An element $x\in G$ is a real element if $x$ and $x^{-1}$ are conjugate
in $G.$ For $x\in G,$ the conjugacy class $x^G$ is said to be a real conjugacy class if every
element of $x^G$ is real. In this paper, we show that if $4$ divides no real conjugacy class sizes
of a finite group $G,$ then $G$ is solvable. We also study the structure of such groups in detail.
This generalizes several results in the literature.
\end{abstract}

\maketitle


\section{Introduction}

It is well known that the arithmetic conditions on the conjugacy class sizes have strong influence
on the structure of finite groups. For a full account, we refer the readers to an excellent survey
by Camina and Camina \cite{CC}. In particular, the problem of recognizing the solvability of finite
groups using arithmetic conditions on class sizes has attracted many authors.  A classical result
in finite group theory says that `\emph{For a fixed prime $p,$ if $p$ does not divide any conjugacy
class sizes of a finite group $G,$ then the Sylow $p$-subgroup of $G$ is an abelian direct factor
of $G.$}' (See \cite[Proposition~4]{CH} for instance). Especially, if $p=2,$ then the group $G$ is
solvable. This result has been  generalized by many authors in the literature. For example, A.R.
Camina \cite{Camina} showed that the Sylow $p$-subgroup of $G$ is a direct factor if the conjugacy
class size of every $p'$-element of $G$ is prime to $p.$  Recall that  an element $x$ in a finite
group $G$  is {\bf real} if $x$ and $x^{-1}$ are conjugate in $G.$ Equivalently, $x\in G$ is real
if and only if $\chi(x)$ is real, that is, $\chi(x)\in \RR,$ for any $\chi\in\Irr(G),$ where
$\Irr(G)$ denotes the set of all complex irreducible characters of $G.$ Of course, if $x\in G$ is
real then every element in the conjugacy class $K:=x^G$ containing $x$ is also real and we say that
$K$ is a {\bf real conjugacy class} and $|K|$ is a {\bf real class size}. It turns out that the
aforementioned results hold by restricting to real class sizes. For example, in
\cite[Theorem~6.1]{DNT}, Dolfi, Navarro and Tiep showed that the Sylow $2$-subgroup of a finite
group $G$ is normal in $G$ if all real conjugacy classes of $G$ have odd size. (For odd primes, a
similar result has been obtained by Guralnick, Navarro and Tiep \cite{GNT} but there is some
complication for the prime $p=3.$)  In \cite{NST}, Navarro, Sanus and Tiep studied the structure of
finite groups whose all real class sizes are $2$-powers. In particular, all these groups are
solvable. In this paper, we generalize a result of Chillag and Herzog \cite[Proposition~5]{CH} by
proving the following.

\begin{theoremA}
Let $G$ be a finite group. If the conjugacy class size of every odd prime power order real element
in $G$ is a $2$-power or not divisible by $4,$ then $G$ is solvable.
\end{theoremA}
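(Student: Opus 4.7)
The plan is a minimal-counterexample argument relying on the Classification of Finite Simple Groups. Let $G$ be a group of minimal order that is non-solvable while every real element of $G$ of odd prime power order has class size that is either a $2$-power or not divisible by $4$.

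First I would show the hypothesis descends to every quotient $G/N$. If $\bar x \in G/N$ is real of odd prime power order $q^n$, a standard lifting argument produces a real preimage $x \in G$ of the same order $q^n$; since $|\bar x^{G/N}|$ divides $|x^G|$ and both defining properties (being a $2$-power, or not being divisible by $4$) are preserved under passage to divisors, the hypothesis is inherited. Minimality of $G$ then makes every proper quotient solvable, so $G$ admits a unique minimal normal subgroup $N$, and $N$ must be nonabelian. Writing $N = S_1 \times \cdots \times S_k$ with $S_i \cong S$ a nonabelian simple group, we have $\Cen_G(N)=1$ and $G \hookrightarrow \mathrm{Aut}(N)$.

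The heart of the proof is then to manufacture a real $x \in N$ of odd prime power order whose class $x^G$ has size divisible both by $4$ and by an odd prime, directly contradicting the hypothesis. By taking $x$ in a single factor $S_1$ and using the embedding into $\mathrm{Aut}(N)$, one has $|x^S|$ dividing $|x^G|$. It therefore suffices to prove, for every nonabelian simple group $S$, that $S$ contains a real element $s$ of odd prime power order with $|s^S|$ divisible by $4p$ for some odd prime $p$. This is checked through CFSG: for $A_n$, $n \geq 5$, the class of a $3$-cycle already does the job (e.g.\ in $A_5$ it is real of size $20 = 4\cdot 5$); for the sporadic groups and the Tits group, the Atlas provides a suitable element; and for groups of Lie type, one combines the fact that most semisimple classes are real (via Weyl group elements inverting an appropriate maximal torus) with torus-order estimates to produce a real semisimple, or in defining characteristic a unipotent, element whose centralizer is small enough.

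The main obstacle will be the uniform treatment of simple groups of Lie type, especially the low-rank families $\mathrm{PSL}_2(q)$, the Suzuki groups ${}^2B_2(q)$ and the Ree groups, where maximal tori are few and real semisimple classes are comparatively rigid. A second subtlety is the transition from the simple-group lemma to the full statement about $G$: outer automorphisms in $G/N$ may fuse classes and potentially cancel odd-prime divisors of the $G$-class size, so one may need to choose $x$ with extra rigidity --- for instance, a suitable diagonal element of $N = S^k$ when $k > 1$, or an element of a $G$-invariant maximal torus inverted by a $G$-element --- so that $|x^G|$ still retains both $4$ and an odd prime as divisors.
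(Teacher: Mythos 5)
Your overall strategy is the same as the paper's: a minimal counterexample, inheritance of the hypothesis, and then a CFSG check that every nonabelian simple group contains a real element of odd prime power order whose class size violates the hypothesis. However, two concrete points need repair. First, your witness for the alternating groups is wrong: the $3$-cycles in $\mathrm{Alt}(n)$ form a single real class of size $n(n-1)(n-2)/3$, and for $n\equiv 3\pmod 4$ this has $2$-part exactly $2$ (for $n=7$ the size is $70$), so it is neither a $2$-power nor divisible by $4$ and gives no contradiction. The paper instead uses a $5$-cycle, whose class size $\frac{1}{5}n(n-1)(n-2)(n-3)(n-4)$ (halved for $n=5,6$) is divisible by $4$ for all $n\ge 5$. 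Since $\mathrm{Alt}(n)$ is the one family you actually verify, this is a genuine gap in the proposal as written, albeit a locally fixable one.

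Second, your reduction stops too early and then manufactures a difficulty that does not exist. The hypothesis passes not only to quotients but also to normal subgroups: a real element of $N$ of odd prime power order is real in $G$, and $|x^N|$ divides $|x^G|$, while both properties (being a $2$-power, not being divisible by $4$) are inherited by divisors. Hence a minimal counterexample is simple, and the whole apparatus of $N=S_1\times\cdots\times S_k$, $\Cen_G(N)=1$, $G\hookrightarrow \mathrm{Aut}(N)$, and the choice of ``rigid'' diagonal elements is unnecessary. Your worry that outer automorphisms ``may fuse classes and cancel odd-prime divisors'' is also backwards: fusion only enlarges classes, and $|x^N|$ always divides $|x^G|$, so divisors are never lost. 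Finally, note that the paper disposes of the ``$2$-power'' alternative at one stroke via Burnside's theorem that $\{1\}$ is the only prime-power-size class in a nonabelian simple group; this lets one demand only that the class size be divisible by $4$ (rather than by $4p$), which is exactly the form in which such elements are already available in the literature for the groups of Lie type.
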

This also gives a generalization to  \cite[Theorem~3.1]{DPS}. We note that the analogous problem
for complex irreducible characters does not hold as $4$ divides no irreducible complex character
degrees of the alternating group of degree $7,$ but this group is clearly not solvable.

 We now study in detail the structure of
finite groups whose conjugacy class sizes of odd prime power order real elements are not divisible
by $4.$ Note that $\oo^{2'}(G)$ is the smallest normal subgroup of $G$ such that the quotient
$G/\oo^{2'}(G)$ is a group of odd order.

\begin{theoremB}
Let $G$ be a finite group. If the conjugacy class size of every odd prime power order real element
in $G$ is not divisible by $4,$ then $\oo^{2'}(G)$ is $2$-nilpotent and $G/\oo_{2'}(G)$ is
$2$-closed.
\end{theoremB}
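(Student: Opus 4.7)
By Theorem A, $G$ is solvable, so I work throughout in the solvable setting. My plan is to prove the 2-closedness of $\bar G := G/\oo_{2'}(G)$ first, and deduce the 2-nilpotency of $\oo^{2'}(G)$ as a formal consequence. For the deduction, since $\oo_{2'}(G)$ is a $2'$-group, the projection $G \to \bar G$ sends $\oo^{2'}(G)$ onto $\oo^{2'}(\bar G)$; if $\bar G$ is 2-closed with normal Sylow 2-subgroup $\bar P$, then $\oo^{2'}(\bar G) = \bar P$. Thus $\oo^{2'}(G)/(\oo^{2'}(G) \cap \oo_{2'}(G)) \cong \bar P$ is a 2-group, and the intersection $\oo^{2'}(G) \cap \oo_{2'}(G)$ is a normal $2'$-subgroup of $\oo^{2'}(G)$, i.e.\ the normal Hall $2'$-complement of $\oo^{2'}(G)$.

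For the 2-closedness of $\bar G$ I would induct on $|G|$, reducing to the case $\oo_{2'}(G) = 1$ by showing that the hypothesis passes to $G/N$ whenever $N \trianglelefteq G$ is a minimal normal $2'$-subgroup. The content of this reduction is the lifting of a real element $\bar x$ of odd prime-power order $q^a$ in $G/N$ to a real $q$-element of $G$. Picking a $q$-element $x_0 \in G$ mapping to $\bar x$ and a 2-element $t \in G$ whose image inverts $\bar x$ (both lifts exist because $|N|$ is coprime to both $q$ and $2$), one has $x_0^t \equiv x_0^{-1} \pmod{N}$; a Schur--Zassenhaus argument inside $N\langle x_0,t\rangle$ then replaces $x_0$ by a $q$-element $x$ in the same coset with $x^t = x^{-1}$.

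In the base case $\oo_{2'}(G)=1$, set $P := \oo_2(G)$. Solvability gives $F(G)=P$ and $C_G(P)\le P$. Suppose for contradiction $G\neq P$, and take an odd prime $q$ dividing $|G/P|$ together with a nontrivial $q$-element $y \in G \setminus P$. Coprime action forces $[P,y]\neq 1$. The aim is to exhibit a real $q$-element $x$ with $|G:C_G(x)|_2 \ge 4$, contradicting the hypothesis. Note that $|G:C_G(x)|_2 = |P:C_P(x)|$ once $P$ is chosen to contain a Sylow 2-subgroup of $C_G(x)$, so the task becomes to find a real $q$-element $x$ with $|P:C_P(x)|\ge 4$. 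The natural route to reality is to exhibit an involution $t \in P$ inverting a $P$-conjugate of $y$, and the natural source of a lower bound on $|P:C_P(x)|$ is the coprime action of $x$ on $P$.

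The main obstacle is the simultaneous achievement of reality and of the quantitative bound $|P:C_P(x)|\ge 4$: a priori, the coprime action of $x$ on $P$ need only force $|P:C_P(x)|\ge 2$, which is permitted by the hypothesis. I expect to overcome this either by choosing $y$ carefully, perhaps via a Thompson $A\times B$-style argument applied inside $P\langle y\rangle$, or by reducing to an existing structural result such as \cite[Theorem~6.1]{DNT} applied to a well-chosen subgroup. This quantitative step is where I anticipate the principal technical work of the proof.
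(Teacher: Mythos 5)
Your outer architecture is sound and is essentially the paper's argument run in the opposite direction: the paper first proves $2$-nilpotency of $\oo^{2'}(G)$ (Proposition~\ref{2nilpotent}, by induction reducing to $\oo_{2'}(G)=1$) and then obtains $2$-closedness of $G/\oo_{2'}(G)$ by a Frattini argument, whereas you prove $2$-closedness first and deduce $2$-nilpotency formally. Your formal deduction is correct, and your reduction to the case $\oo_{2'}(G)=1$ is exactly the content of Lemma~\ref{liftpower}. The problem is that the base case $\oo_{2'}(G)=1$, which carries the entire weight of the theorem, is not proved: you explicitly leave open ``the simultaneous achievement of reality and of the quantitative bound $|P:C_P(x)|\geq 4$,'' and the attack you sketch cannot work as stated. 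You suppose $G\neq P=\oo_2(G)$ and pick an arbitrary $q$-element $y\in G\setminus P$, hoping to invert a $P$-conjugate of it by an involution of $P$; but $G=\mathrm{Alt}(4)$ has property $\T$, satisfies $\oo_{2'}(G)=1$ and $G\neq\oo_2(G)$, and has no nontrivial real element of odd order at all, so no such real $x$ exists and no contradiction can be reached. (The correct target in the base case is ``$G$ is $2$-closed,'' not ``$G=\oo_2(G)$.'')

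The two missing ideas, both in the paper, are these. First, reality should come from Lemma~\ref{NoOddOrder} rather than be manufactured: if $G$ is not $2$-closed, then $G$ already possesses a nontrivial real element of odd order, hence by Lemma~\ref{real}(2) a real element $x$ of odd prime order. Second, the quantitative step is not a coprime-action estimate but a parity observation about the generalized centralizer $\Cen_G^*(x)=\{g\in G: x^g\in\{x,x^{-1}\}\}$. Since $x\neq x^{-1}$, one has $|x^G|=2\,|G:\Cen_G^*(x)|$ exactly, so the hypothesis $4\nmid|x^G|$ forces $|G:\Cen_G^*(x)|$ to be odd; hence $\Cen_G^*(x)$ contains a full Sylow $2$-subgroup and therefore contains $U=\oo_2(G)$. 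Every $u\in U$ then either centralizes or inverts $x$: if all of $U$ centralizes $x$, then $x\in\Cen_G(U)\leq U$, impossible for an element of odd prime order; if some $u\in U$ inverts $x$, then $x^{-2}=[x,u]\in U$ and $\la x\ra\cap U=1$ forces $x^2=1$, again impossible. This yields the contradiction with no lower bound on $|P:\Cen_P(x)|$ needed. Without this step (or an equivalent substitute for what you call the principal technical work), the proposal does not constitute a proof.
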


If $n$ is a positive integer and $p$ is a prime, then we can write $n=n_pn_{p'},$ where $n_p$ is a
power of $p$ and $n_{p'}$ is relative prime to $p.$ We will call $n_p$ the $p$-part of $n.$ One
consequence of Theorem~B is that if the $2$-part of the conjugacy class sizes of all noncentral
real elements of $G$ is exactly $2,$ then $\oo^{2'}(G)$ is $2$-nilpotent. To generalize this
result, we dare to make the following conjecture.

\begin{conjectureC}\label{conject}
Suppose that the sizes of all noncentral real conjugacy classes of a finite group $G$ have the same
$2$-part. Then $\oo^{2'}(G)$ is $2$-nilpotent. In particular,  $G$ is solvable.
\end{conjectureC}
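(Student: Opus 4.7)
The plan is to split on the common $2$-part $2^a$ of the noncentral real class sizes and reduce the difficult case to first proving solvability. If $a=0$, every real class of $G$ has odd size, so by \cite[Theorem~6.1]{DNT} the Sylow $2$-subgroup $P$ of $G$ is normal. Then $G/P$ has odd order, so $\oo^{2'}(G)\le P$; and since every $2$-element of $G$ maps trivially to the odd-order quotient $G/\oo^{2'}(G)$, we have $P\le\oo^{2'}(G)$ as well, whence $\oo^{2'}(G)=P$ is a $2$-group, trivially $2$-nilpotent. If $a=1$, then no noncentral real class size is divisible by~$4$, so Theorem~B applies verbatim. The interesting range is $a\ge 2$, where every noncentral real class size is divisible by~$4$, so Theorem~A no longer applies and solvability must be proved from scratch.

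The first step in the case $a\ge 2$ is a structural observation valid already for any $a\ge 1$: if $P\in\Syl_2(G)$, then $\Omega_1(\Center(P))\le\Center(G)$. Indeed, if $z\in\Center(P)$ is an involution with $z\notin\Center(G)$, then $z$ is a noncentral real element with $P\le\Cen_G(z)$, forcing $|\Cen_G(z)|_2\ge|P|=|G|_2$; but the hypothesis gives $|\Cen_G(z)|_2=|G|_2/2^a<|G|_2$, a contradiction. Leveraging this together with the classification of finite simple groups, I would show that no non-abelian simple group $S$ can have all noncentral real classes of the same $2$-part: in each family (alternating, classical, exceptional, sporadic) one exhibits a real involution and a real element of odd prime power order whose class sizes have distinct $2$-parts. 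A minimal non-solvable counterexample then leads to a contradiction via its almost-simple composition factors, after additional bookkeeping on how those sections embed in $G$.

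With solvability in hand, I would argue by induction on $|G|$. Let $G$ be a minimal counterexample and $N$ a minimal normal subgroup, necessarily elementary abelian $p$-group. If $p=2$, the observation above forces $N\le\Center(G)$, and one attempts to pass to $G/N$ by analysing how real classes of $G$ project under the quotient map, using the centrality of $N$ to preserve the reality condition on $2$-regular parts. If $p$ is odd, one studies the $G$-action on $N$ and controls, via the hypothesis on $2$-parts, the centralizers of nontrivial elements of $N$. The target is to verify that $\oo^{2'}(G)$ has a normal Sylow $2$-complement, which by the Frobenius normal $p$-complement theorem reduces to a statement about $2$-local subgroups and can be verified through centralizer arithmetic analogous to that used in the proofs of Theorems~A and~B of this paper.

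The main obstacle is decidedly the solvability step for $a\ge 2$. Since all noncentral real class sizes are then divisible by~$4$, the character-theoretic techniques based on non-divisibility by small primes used in \cite{DNT}, \cite{GNT}, \cite{NST} and in the present paper do not apply directly. A uniform classification-based treatment of real classes in every family of finite simple groups, especially classical and exceptional groups of small rank where real class structure is most delicate, seems unavoidable; establishing the two-distinct-$2$-parts statement in that generality will be the bulk of the work.
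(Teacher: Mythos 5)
This statement is stated in the paper as Conjecture~C and the paper offers \emph{no proof} of it; it is presented as an open problem, supported only by the evidence of Theorem~B (which covers the case where the common $2$-part is $2$) and the result of Casolo, Dolfi and Jabara cited as \cite{Casolo}. Your proposal likewise does not constitute a proof. The cases you do settle are exactly the ones the paper already knows: $a=0$ follows from \cite[Theorem~6.1]{DNT}, and $a=1$ is the consequence of Theorem~B that the paper itself records in the paragraph introducing the conjecture. Everything for $a\geq 2$ is announced rather than proved (``I would show'', ``one attempts to'', ``the bulk of the work''), and that is precisely the content of the conjecture.

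Beyond being incomplete, two of your announced steps face concrete obstructions you do not address. First, the minimal counterexample strategy relies implicitly on the hypothesis being inherited by normal subgroups and quotients, as properties $\T$ and $\WT$ are via Lemma~\ref{inherit}. But ``all noncentral real class sizes have the same $2$-part'' is not of divisibility type: for $x\in N\unlhd G$ real and noncentral in $N$, Lemma~\ref{conjsizes} only gives that $|x^N|$ divides $|x^G|$, so the $2$-part of $|x^N|$ may be strictly smaller than $2^a$ and may vary with $x$; the analogous problem occurs for quotients even after lifting real elements via Lemma~\ref{lift}. Without an inheritance statement the induction has no base to stand on. Second, your proposed CFSG step is a genuinely new classification claim (every nonabelian simple group has two noncentral real classes whose sizes have distinct $2$-parts) that is substantially harder than Proposition~\ref{simple}, which only needs to exhibit one real element of odd prime power order with class size divisible by $4$; you give no argument for it in any family. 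Your observation that $\Omega_1(\Center(P))\leq\Center(G)$ for $P\in\Syl_2(G)$ when $a\geq 1$ is correct and potentially useful, but it is far from closing the gap.
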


The group $\rm{PSL}_3(2)$ shows that we cannot restrict the hypothesis of Conjecture C to
noncentral odd order real conjugacy classes since this group has only one noncentral odd order real
conjugacy class with size $7\cdot 2^3.$ Recently, it has been proved in \cite{Casolo} that if the
sizes of noncentral conjugacy classes of a group G have the same non-trivial $p$-part for some
prime $p,$ then $G$ is solvable and has a normal $p$-complement. This interesting result which
gives a weak analogue to the famous J. Thompson's Theorem on character degrees raises our hope of
the veracity of our conjecture. One  application of Conjecture C, if true, is a positive answer to
a question due to G. Navarro (see \cite[Question]{San}) which asked whether a group $G$ is solvable
if $|\Cen_G(x)|=|\Cen_G(y)|$ for any noncentral real elements $x$ and $y$ in $G.$ In other words,
whether a group $G$ is solvable if it has at most two real class sizes. As suspected, we will make
use of the classification of finite simple groups for the proof of Theorem A. It would be
interesting if one can find a classification free proof of this result.

\smallskip
\begin{notation} By a group, we mean a finite group. Let $G$ be a group.  If $g\in G,$ then we write $o(g)$ for the
order of $g.$ The greatest common divisor of two integers $a$ and $b$ is $\gcd(a,b).$ The $m^{th}$
cyclotomic polynomial in variable $q$ is denoted by $\Phi_m(q).$ We write $\Center(G)$ for the
center of $G.$ The symmetric and alternating groups of degree $n$ are denoted by $\textrm{Sym}(n)$
and $\textrm{Alt}(n),$ respectively. Finally, we denote by $\ZZ_m$ a cyclic group of order $m.$
Unexplained notation is standard.
\end{notation}

\section{Preliminaries}

 We collect here some basic properties of conjugacy classes in a finite group.
 The following results are well known and easy to verify.

\begin{lemma}\label{conjsizes} Let $N$ be a normal subgroup of a group $G.$ Then
\begin{enumerate}
\item If $x\in N,$ then $|x^N|$ divides $|x^G|.$
\item If $x\in G,$ then $|(Nx)^{G/N}|$ divides $|x^G|.$
\end{enumerate}
\end{lemma}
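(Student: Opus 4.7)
The plan is to handle each part using standard centralizer identities together with the second isomorphism theorem, and for (2) to exploit $G$-equivariance of the natural quotient map.

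For (1), since $x \in N$, I would observe that $C_N(x) = C_G(x) \cap N$. By the second isomorphism theorem, $N/(C_G(x) \cap N) \cong NC_G(x)/C_G(x)$, so
\[
|x^N| = |N : C_N(x)| = |NC_G(x) : C_G(x)|,
\]
and the right-hand side divides $|G : C_G(x)| = |x^G|$ because $NC_G(x)$ is a subgroup of $G$ containing $C_G(x)$.

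For (2), I would consider the canonical quotient map $\pi : G \to G/N$ and its restriction $\pi : x^G \to (Nx)^{G/N}$. Both $x^G$ and $(Nx)^{G/N}$ are transitive $G$-sets under the conjugation action (with $G$ acting on $(Nx)^{G/N}$ through the quotient $G/N$), and $\pi$ is a $G$-equivariant surjection between them. Hence all fibres have the same cardinality, and $|(Nx)^{G/N}|$ divides $|x^G|$. As a sanity check, one can also verify this directly: the map $C_G(x) \to C_{G/N}(Nx)$ defined by $g \mapsto Ng$ has image $NC_G(x)/N$ of order $|C_G(x)|/|C_N(x)|$, so $|C_G(x)|$ divides $|C_N(x)|\cdot |C_{G/N}(Nx)|$, which in turn divides $|N|\cdot |C_{G/N}(Nx)|$; dividing $|G|$ by both sides yields the desired divisibility.

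There is essentially no real obstacle; the lemma is a direct consequence of the isomorphism theorems. The only minor conceptual point in (2) is that a $G$-conjugate of $x$ need not be $N$-congruent to $x$, which is precisely why the $G$-equivariance viewpoint (or the slightly more computational argument via the homomorphism $C_G(x) \to C_{G/N}(Nx)$) is the cleanest way to package the divisibility.
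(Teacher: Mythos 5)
Your proof is correct; the paper states this lemma without proof, dismissing it as ``well known and easy to verify,'' and your arguments via the second isomorphism theorem for (1) and the $G$-equivariant surjection $x^G \to (Nx)^{G/N}$ (or equivalently the homomorphism $\Cen_G(x) \to \Cen_{G/N}(Nx)$ with kernel $\Cen_N(x)$) for (2) are exactly the standard verifications one would supply.
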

The next result is elementary. For completeness we present a proof.
\begin{lemma}\label{Centralizers}
Let $N\leq \Center(G)$ be a central subgroup of a finite group $G.$ If $x\in G$ such that
$(o(x),|N|)=1,$ then $|x^G|=|(Nx)^{G/N}|.$
\end{lemma}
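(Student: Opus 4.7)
The plan is to compute both sides via the orbit-stabilizer theorem and show the centralizers match up correctly. Concretely, $|x^G|=[G:\Cen_G(x)]$ and $|(Nx)^{G/N}|=[G/N:\Cen_{G/N}(Nx)]$, and since $N\le\Center(G)\subseteq\Cen_G(x)$, the quotient $\Cen_G(x)/N$ is a well-defined subgroup of $G/N$ contained in $\Cen_{G/N}(Nx)$. The whole proof reduces to establishing the reverse containment
\[
\Cen_{G/N}(Nx)\subseteq \Cen_G(x)/N,
\]
at which point index comparison in $G/N$ immediately yields $|(Nx)^{G/N}|=[G:\Cen_G(x)]=|x^G|$.

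To prove this reverse containment, I would take an arbitrary $g\in G$ with $Ng\in\Cen_{G/N}(Nx)$; then $[g,x]\in N$, so $g^{-1}xg=xn$ for some $n\in N$. The key step is to use the coprimality hypothesis $(o(x),|N|)=1$ to force $n=1$. Since $N$ is central, $x$ and $n$ commute, and $o(n)$ divides $|N|$, so $\gcd(o(x),o(n))=1$. Hence $o(xn)=o(x)\,o(n)$. On the other hand, $o(g^{-1}xg)=o(x)$, so $o(x)=o(x)\,o(n)$, forcing $o(n)=1$ and $n=1$. Thus $g\in\Cen_G(x)$, which gives the desired inclusion.

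The main (only) obstacle is recognizing that one must separately invoke the coprimality of $o(x)$ and $|N|$ to kill the element $n\in N$; without that hypothesis, the inclusion $\Cen_G(x)/N\subseteq\Cen_{G/N}(Nx)$ can be strict (for instance when $x\in N$ itself is noncentral-looking modulo nothing, though here the centrality of $N$ makes that vacuous — the real issue is that conjugation in $G$ can genuinely move $x$ within its coset $Nx$, unless the orders prevent it). Once this point is handled, the proof is a two-line index computation, so I would write it up as a single short paragraph.
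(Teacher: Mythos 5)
Your proposal is correct and follows essentially the same route as the paper: identify $\Cen_{G/N}(Nx)$ with $H/N$ for the preimage $H$, reduce to showing $H\subseteq\Cen_G(x)$, and use the coprimality of $o(x)$ and $|N|$ to force the element $n\in N$ with $x^g=xn$ to be trivial. The only cosmetic difference is how you kill $n$ (via $o(xn)=o(x)o(n)$ for commuting elements of coprime order, versus the paper's raising $x^g=xz$ to the power $o(x)$ to get $z^{o(x)}=1$); both are valid and equivalent in substance.
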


\begin{proof}
Assume that $x\in G$ with $(o(x),|N|)=1.$ Let $H$ be the full inverse image in $G$ of
$\Cen_{G/N}(Nx).$ Then $N\leq \Cen_G(x)\leq H\leq G$ and therefore $|x^G|=|G:\Cen_G(x)|$ and
$|(Nx)^{G/N}|=|G/N:H/N|=|G:H|.$ Hence we need to show that $H=\Cen_G(x).$ As $\Cen_G(x)\leq H,$ it
suffices to show that $H\leq \Cen_G(x).$ Let $g\in H.$ We have that $Nx^g=Nx$ and hence $x^g=xz$
for some $z\in N.$ Since $x^g$ and $x$ have the same order, say $n,$ we deduce that
$(x^g)^n=x^nz^n,$ so $z^n=1.$ Therefore, $o(z)$ divides $\gcd(o(x),|N|)=1,$ so $z=1.$ Hence $x^g=x$
which means that $g\in \Cen_G(x)$ and so $H\leq \Cen_G(x)$ as required.
\end{proof}

We now turn our attention to real elements and real class sizes.
\begin{lemma}\emph{(\cite[Lemma~2.4]{DPS}).}\label{real} Let $G$ be a finite group.

\begin{enumerate}
\item If $x\in G$ is real and $|x^G|$ is odd, then $x^2=1.$
\item If $x\in G$ is real, then every power of $x$ is a real element of $G.$
\item The identity is the unique real element of $G$ if and only if $|G|$ is odd.
\item If $N\unlhd G$ and $|G/N|$ is odd, then real elements of $G$ are real elements of $N.$
\end{enumerate}
\end{lemma}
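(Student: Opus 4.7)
The plan is to prove the four parts in order, with part~(1) as the engine: once (1) is established, (2)--(4) follow by short formal arguments. The crucial observation for~(1) is that if $x$ is real via $x^g = x^{-1}$ and $x \neq x^{-1}$, then $g$ contributes an element of order~$2$ to the quotient $\Norm_G(\la x\ra)/\Cen_G(x)$, forcing a factor of~$2$ into the index $|x^G| = [G:\Cen_G(x)]$.

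For (1), assume $x\in G$ is real with $x^g = x^{-1}$ and suppose, for contradiction, that $x^2 \neq 1$. Then $g\notin \Cen_G(x)$, but $g^2 \in \Cen_G(x)$ since $x^{g^2} = (x^{-1})^g = x$. Thus the coset $g\,\Cen_G(x)$ has order~$2$ in $\Norm_G(\la x\ra)/\Cen_G(x)$, and in particular this index is even. Because $[\Norm_G(\la x\ra):\Cen_G(x)]$ divides $[G:\Cen_G(x)]=|x^G|$, we conclude that $|x^G|$ is even, contradicting the hypothesis.

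Parts (2)--(4) are then short. For~(2), the same conjugator $g$ works for any power, since $(x^k)^g = (x^g)^k = (x^k)^{-1}$. For~(3), if $|G|$ is odd and $x$ is real, then $|x^G|$ divides $|G|$ and so is odd, whence~(1) forces $x^2 = 1$; combined with oddness this gives $x=1$, while the converse follows from Cauchy's theorem since any involution is trivially real. For~(4), pass to $G/N$: the image $Nx$ of a real element $x\in G$ is real in the odd-order quotient $G/N$, so by~(3) it is trivial, i.e.\ $x\in N$; picking $g\in G$ with $x^g = x^{-1}$, a routine induction yields $x^{g^k} = x^{(-1)^k}$, so for the odd integer $m = |G/N|$ we have $g^m\in N$ and $x^{g^m} = x^{-1}$, exhibiting $x$ as real in $N$. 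The main (and essentially only) obstacle is the choice of the intermediate subgroup $\Norm_G(\la x\ra)$ in~(1); once this index is identified as even, the remaining parts fall out as bookkeeping.
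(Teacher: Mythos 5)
Your proof is correct. Note that the paper does not actually prove this lemma --- it is quoted from \cite[Lemma~2.4]{DPS} without proof --- so there is no in-paper argument to compare against; your write-up supplies a valid self-contained proof. Each step checks out: in (1), $g$ normalizes $\la x\ra$ and $\Cen_G(x)\unlhd \Norm_G(\la x\ra)$, so the coset $g\,\Cen_G(x)$ genuinely has order $2$ and forces $2\mid [G:\Cen_G(x)]$; in (4) you correctly verify both that $x\in N$ and that the conjugating element $g^m$ lies in $N$. It is worth observing that your key device in (1) --- an overgroup of $\Cen_G(x)$ of even index over it witnessing evenness of $|x^G|$ --- is exactly the generalized centralizer $\Cen_G^*(x)=\{g\in G : x^g\in\{x,x^{-1}\}\}$ that the author deploys later in the proof of Proposition 4.3; your choice of $\Norm_G(\la x\ra)$ is a slightly larger subgroup that serves the same purpose.
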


\begin{lemma}\label{realby2} If $x,g\in G$ with $x^g=x^{-1},$ then $x^t=x^{-1}$ for a $2$-element $t$ in $\la g\ra.$
\end{lemma}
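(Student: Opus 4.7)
The plan is to exhibit the $2$-part of $g$ as the required witness. Write $o(g)=2^a m$ with $m$ odd, and set $t:=g^m\in\la g\ra$. Since $\gcd(2^a,m)=1$, the element $t$ has order $2^a$, so $t$ is a $2$-element of $\la g\ra$; if $a=0$, then $t=1$, which is still a $2$-element and will turn out to work for free.

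It remains to verify $x^t = x^{-1}$. The key observation is that $g^2$ centralizes $x$: applying the hypothesis twice gives
\[
x^{g^2}=(x^g)^g=(x^{-1})^g=(x^g)^{-1}=x.
\]
Consequently, a straightforward induction shows that $x^{g^i}$ depends only on the parity of $i$, equalling $x$ when $i$ is even and $x^{-1}$ when $i$ is odd. Since $m$ is odd, $x^t = x^{g^m} = x^{-1}$, as required.

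There is no real obstacle in this proof; morally the lemma just says that the inversion action of $\la g\ra$ on $\la x\ra$ factors through a quotient of order at most $2$, and the odd part of the cyclic group $\la g\ra$ automatically lies in the kernel of that action. The only corner case, namely $g$ of odd order, is absorbed seamlessly: the parity argument then forces $x^2=1$, and $t=1$ tautologically satisfies $x^t=x=x^{-1}$.
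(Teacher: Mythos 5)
Your proof is correct and is essentially the paper's proof: the paper likewise takes $t=g^m$ where $o(g)=2^km$ with $m$ odd, merely omitting the verification that $g^2$ centralizes $x$ and hence $x^{g^m}=x^{-1}$. Your handling of the odd-order corner case (where $x^2=1$ is forced) is a welcome bit of extra care, but the approach is the same.
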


\begin{proof} If $o(g)=2^km,$ where $m$ is odd, then the $2$-element $t:=g^m$  will work.
\end{proof}

The following lemma gives some conditions to guarantee that a real element of the quotient  group
$G/N$ can be lifted to a real element of the whole group $G.$

\begin{lemma}\label{lift} Suppose that $N\unlhd G$  and that $Nx$ is a real element in $G/N.$

\begin{enumerate}
\item If $(o(x),|N|)=1,$ then $x$ is real in $G.$
\item If the order of $Nx$ in $G/N$ is prime to $|N|,$ then $Nx=Ny$ for some real element $y$ of $G$
(of odd order if the order of $Nx$ is odd).
\item If  $|N|$ or the order of $Nx$ in $G/N$ is odd, then $Nx=Ny$ for some real element $y$ of $G$
(of odd order if the order of $Nx$ is odd).
\end{enumerate}
\end{lemma}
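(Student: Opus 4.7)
The plan is to dispatch parts (1) and (2) via the Schur--Zassenhaus theorem and to handle (3) by lifting a $2$-element that inverts $Nx$ in $G/N$ and applying a fixed-point argument on the coset $Nx$. For (1), I would pick $g \in G$ with $Nx^g = Nx^{-1}$ and work inside $H = N\langle x\rangle$. The coprimality $(o(x),|N|)=1$ makes $\langle x\rangle$ a cyclic Hall $\pi$-complement of $N$ in $H$ (for $\pi=\pi(o(x))$), and $\langle x^g\rangle \leq H$ is another such Hall complement; Schur--Zassenhaus then produces $u \in N$ with $\langle x^g\rangle = \langle x^u\rangle$. Since such a Hall complement meets each coset of $N$ in $H$ in exactly one element, the unique element of $\langle x^u\rangle$ in $Nx^{-1}$ is $(x^u)^{-1}$, forcing $x^g = (x^u)^{-1}$ and hence $x^{gu^{-1}} = x^{-1}$. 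For (2), the same subgroup $H = N\langle x\rangle$ has $|H/N|=o(Nx)$ coprime to $|N|$, so by Schur--Zassenhaus $N$ admits a cyclic complement $\langle y\rangle$ in $H$, which can be arranged so that $Ny = Nx$ and $o(y)=o(Nx)$; part (1) applied to $y$ then gives $y$ real in $G$, of odd order whenever $o(Nx)$ is odd.

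For (3), I would first use Lemma~\ref{realby2} inside $G/N$ to replace the inverting coset by a $2$-element $Nt \in G/N$, and then lift $Nt$ to a $2$-element $t \in G$. The lift is always available: a Sylow $2$-subgroup $S$ of the preimage of $\langle Nt\rangle$ in $G$ intersects $N$ in a Sylow $2$-subgroup of $N$, so $S$ surjects onto $\langle Nt\rangle$ and contains a $2$-element representative of $Nt$. Writing $x^t = x^{-1}n$ with $n \in N$, I consider the permutation $f : Nx \to Nx$ defined by $f(c) = (c^t)^{-1}$. A direct computation gives $f^2(c) = c^{t^2}$, so $f$ has $2$-power order in $\mathrm{Sym}(Nx)$, and the fixed points of $f$ are exactly the real elements of $Nx$ (those inverted by $t$). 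When $|N|$ is odd, $|Nx|=|N|$ is odd, so the $2$-element $f$ must fix some element of $Nx$, yielding the desired real $y$.

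The main obstacle is the remaining subcase of (3) in which $o(Nx)$ is odd but $|N|$ is possibly even: here $|Nx|$ can be even, and a $2$-element acting on an even-size set need not have any fixed points. My plan is to first observe that the decomposition $Nx = (Nx)_2 (Nx)_{2'}$ in the cyclic group $\langle Nx\rangle$ forces $(Nx)_2 = N$ and therefore $x_2 \in N$, so after replacing $x$ by $x_{2'}$ I may assume $o(x)$ is odd; a careful induction on $|G|$ then reduces to $N$ being a minimal normal subgroup of $G$. In the elementary abelian case this is absorbed either by part (2) (when $|N|$ is a $2$-power, because then $(o(Nx),|N|)=1$) or by the previous subcase of (3) (when $|N|$ is odd), while a non-abelian minimal $N$ would demand an additional structural argument about such chief factors. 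Finally, the odd-order proviso is automatic once a real $y \in Nx$ has been produced: the $2'$-part $y_{2'}$ remains in $Nx$ by the same decomposition and is real by Lemma~\ref{real}(2), giving the required odd-order representative.
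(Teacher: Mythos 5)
The paper offers no proof of this lemma at all --- it just cites \cite[Lemma~3.1]{NST} and \cite[Lemma~2.2]{GNT} --- so your attempt must stand on its own. Parts (1) and (2) are correct: in $H=N\langle x\rangle$ the subgroup $\langle x\rangle$ is indeed a Hall complement to $N$, the conjugacy part of Schur--Zassenhaus applies because $H/N$ is cyclic (hence solvable), and since a complement meets the coset $Nx^{-1}$ only in $(x^u)^{-1}$ you do get $x^{gu^{-1}}=x^{-1}$. The subcase of (3) with $|N|$ odd is also correct: the permutation $f(c)=(c^t)^{-1}$ of the odd-sized coset $Nx$ has $2$-power order (since $f^2$ is conjugation by $t^2$) and therefore fixes some $c$, and passing to $c_{2'}$ settles the odd-order proviso.

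The genuine gap is the remaining subcase of (3): $o(Nx)$ odd but $|N|$ even. Your reduction to a minimal normal $N$ is workable (though you must induct on the pair $(|G|,|N|)$, not on $|G|$ alone, because the second appeal to the inductive hypothesis is to the same group $G$ with the smaller normal subgroup $M$), and the two elementary abelian cases are absorbed as you say; but the non-abelian minimal normal case is precisely the substance of the lemma in the cited sources, and you supply no argument for it. Note also that the fixed-point method cannot be rescued by restricting $f$ to a natural invariant subset of the coset: for $N=\ZZ_2^{\,3}$ inside $N\rtimes\ZZ_3$ (coordinates permuted cyclically) and $x$ a generator of the complement, the coset $Nx$ contains exactly four elements of odd order, so even the set of odd-order elements of $Nx$ --- the obvious $f$-invariant candidate --- can have even cardinality. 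Thus once $|N|$ is even the parity argument genuinely breaks down, a different idea (the one carried out in \cite[Lemma~2.2]{GNT}) is required, and as written part (3) is not proved in full.
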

\begin{proof} These statements can be found in \cite[Lemma~3.1]{NST} and \cite[Lemma~2.2]{GNT}.
\end{proof}
As we are dealing with odd prime power order real elements, we will need the following result which
is an easy consequence of Lemma \ref{lift}(3).

\begin{lemma}\label{liftpower} Let $N$ be a normal subgroup of a group $G$ and let $p$ be an odd prime.
If $Nx$ is a $p$-power order real element in $G/N,$ then $Nx=Ny$ for some $p$-power order real
element $y$ in $G.$
\end{lemma}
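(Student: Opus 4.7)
The plan is to invoke Lemma \ref{lift}(3) and then pass to the $p$-part. Since $p$ is odd, the element $Nx$ of $p$-power order in $G/N$ has odd order, so Lemma \ref{lift}(3) produces a real element $y\in G$ of odd order with $Ny=Nx$. The issue is that the order of $y$ need not itself be a $p$-power: it has the form $p^a m$ with $\gcd(m,p)=1$ and $m$ odd. To remedy this, I would replace $y$ by its $p$-part.

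More precisely, write $y=y_p y_{p'}$, where $y_p,y_{p'}\in\la y\ra$ are the $p$-part and $p'$-part of $y$, respectively. Because both are powers of the real element $y$, each of them is real by Lemma \ref{real}(2). Moreover, $y_p$ has $p$-power order, so the natural candidate for the required element is $y_p$ itself.

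It remains to show $Ny_p=Nx$, i.e., that $y_{p'}\in N$. For this, observe that $Ny_p$ and $Ny_{p'}$ commute in $G/N$ (they are images of commuting elements of $G$) and their orders divide $p^a$ and $m$ respectively, which are coprime. Therefore the order of $Ny=Ny_p\cdot Ny_{p'}$ equals $|Ny_p|\cdot|Ny_{p'}|$. Since $Ny=Nx$ has $p$-power order, this forces $|Ny_{p'}|=1$, i.e., $y_{p'}\in N$, whence $Ny_p=Ny=Nx$ as desired.

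There is no real obstacle here; the argument is a clean two-step reduction: first use Lemma \ref{lift}(3) to lift $Nx$ to a real odd-order element of $G$, then cut down to its $p$-part using Lemma \ref{real}(2) together with the elementary fact that a $p$-power order element of $G/N$ cannot have a nontrivial $p'$-part.
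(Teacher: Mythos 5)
Your proof is correct and follows essentially the same route as the paper: apply Lemma \ref{lift}(3) to lift $Nx$ to an odd-order real element of $G$, then replace that element by its $p$-part, which is a power of it and hence real by Lemma \ref{real}(2). The paper carries out the ``pass to the $p$-part'' step via an explicit B\'ezout computation ($p^au+kv=1$, giving $y=z^{kv}$) rather than your coprime-commuting-orders argument in $G/N$, but this is only a difference in bookkeeping.
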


\begin{proof}
Assume that $Nx\in G/N$ is a real element of order $p^a$ with $a\geq 0.$ If $a=0,$ then the result
is clear. So, we assume that $a\geq 1.$ As $p$ is odd, by Lemma \ref{lift}(3) we obtain that
$Nx=Nz$ for some real element $z$ in $G$ of odd order. It follows that $z^{p^a}\in N$ since $Nx=Nz$
has order $p^a$ in $G/N.$ Write $o(z)=p^bk,$ where $p\nmid k.$ Since $\gcd(p^a,k)=1,$ we deduce
that $p^au+kv=1$ for some integers $u$ and $v.$ Now we have that $Nx=Nz=Nz^{kv}$ as $z^{p^au}\in
N.$ By Lemma \ref{real}(2), we know that $z^{kv}$ is real in $G.$ Also $(z^{kv})^{p^b}=1$ since
$o(z)=p^bk.$ Thus $y=z^{vk}$ is a $p$-power order real element with $Nx=Ny$ as wanted.
\end{proof}
The next result describes the structure of groups with no nontrivial real elements of odd order.
\begin{lemma}\emph{(\cite[Proposition~6.4]{DNT}).}\label{NoOddOrder}
Let $G$ be a finite group. Then every nontrivial real element in $G$ has even order if and only if
$G$ has a normal Sylow 2-subgroup.\end{lemma}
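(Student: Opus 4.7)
The plan is to prove the easy direction directly and the converse by induction on $|G|$.

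Suppose first that $G$ has a normal Sylow $2$-subgroup $P$. Then $|G/P|$ is odd, and Lemma~\ref{real}(4) forces every real element of $G$ to lie in $P$; since $P$ is a $2$-group, every nontrivial real element of $G$ has $2$-power, hence even, order.

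For the converse, assume that every nontrivial real element of $G$ has even order and proceed by induction on $|G|$. Let $N$ be a minimal normal subgroup of $G$. I first verify that $G/N$ inherits the hypothesis: if $Nx$ were a nontrivial real element of odd order in $G/N$, then (either $|N|$ is odd, or $N$ is a $2$-group and the order of $Nx$ is coprime to $|N|$) Lemma~\ref{lift}(3) supplies a real element $y\in G$ of odd order with $Ny=Nx$; the assumption then forces $y=1$ and hence $Nx=N$, a contradiction. By induction, $G/N$ has a normal Sylow $2$-subgroup $K/N$, so that $K\unlhd G$ and $|G/K|$ is odd.

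Now I case-split on the minimal normal subgroup $N$, which as a characteristically simple group is either elementary abelian or a direct product of isomorphic non-abelian simple groups. If $N$ is a $2$-group, then $K$ is a $2$-group and, since $|G/K|$ is odd, is the desired normal Sylow $2$-subgroup of $G$. If $N$ is non-abelian, then the (classification-dependent) fact that every finite non-abelian simple group contains a nontrivial real element of odd order produces such an element in a simple factor of $N$; it remains real in $G$ and contradicts the hypothesis. In the remaining case $N$ is elementary abelian of odd prime exponent $p$. Since $|N|$ and $|K/N|$ are coprime, we have $K=N\rtimes P$ for a Sylow $2$-subgroup $P$ of $G$, and it suffices to show $P\leq\Cen_G(N)$: then $K=N\times P$, $P$ is characteristic in $K\unlhd G$, and so $P\unlhd G$.

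To show $P$ centralizes $N$, suppose for contradiction that some $t\in P$ acts on $N$ with nontrivial order $2^j$; then $u:=t^{2^{j-1}}$ acts on $N$ as an involution. Viewing $N$ as an $\f_p$-vector space with $p$ odd, this involution has a nontrivial $(-1)$-eigenspace, so $u$ inverts some nontrivial $y\in N$, and then $y$ is a real element of $G$ of odd order, contradicting the hypothesis. The main obstacle is the classification-dependent input supplying nontrivial real elements of odd order in every finite non-abelian simple group.
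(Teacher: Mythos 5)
Your argument is correct. Note, however, that the paper does not prove this lemma at all: it is imported verbatim as \cite[Proposition~6.4]{DNT}, so there is no in-paper proof to match against. Your reconstruction is sound and follows the standard route one would expect (and essentially the route taken in \cite{DNT}): the forward direction via Lemma~\ref{real}(4); the converse by induction through a minimal normal subgroup $N$, with inheritance of the hypothesis by $G/N$ via Lemma~\ref{lift}(3), and the three-way case split on $N$. The coprime-action step is fine (an involution acting nontrivially on an elementary abelian $p$-group, $p$ odd, inverts a nontrivial element, which would be a nontrivial odd-order real element), and the Schur--Zassenhaus splitting $K=N\rtimes P$ together with $P\leq\Cen_G(N)$ correctly yields $P$ characteristic in $K\unlhd G$. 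The one external ingredient you flag --- that every nonabelian finite simple group contains a nontrivial real element of odd order --- is indeed classification-dependent, but it is worth observing that it is a formal consequence of the paper's own Proposition~\ref{simple}: a good element is by definition a real element of odd prime power order, so you could cite that proposition rather than leaving the input as an unproved obstacle. With that reference supplied, your proof is complete and self-contained relative to the paper, which is arguably more than the paper itself offers for this statement.
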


We also need the following well known result due to Zsigmondy.
\begin{lemma}\emph{(Zsigmondy Theorem~\cite{Zsigmondy}).}\label{Zsigmondy}
Let $q$ and $n$ be integers with $q\geq 2,$ $n\geq 3.$ Assume that $(q,n)\neq (2,6).$ Then $q^n-1$
has a prime divisor $\ell$ such that $\ell$ does not divide $q^m-1$ for $m<n.$ Moreover $\ell\equiv
1~\mbox{\emph{(mod $n$)}}$ and if  $\ell\mid q^k-1,$ then $n\mid k.$
\end{lemma}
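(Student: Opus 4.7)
The plan is to prove this via cyclotomic polynomials, starting from the identity $q^n-1=\prod_{d\mid n}\Phi_d(q)$, where $\Phi_d(x)\in\ZZ[x]$ is the $d$-th cyclotomic polynomial. A prime $\ell$ is a primitive prime divisor of $q^n-1$ precisely when the multiplicative order of $q$ modulo $\ell$ equals $n$. The key reduction is therefore to show that $\Phi_n(q)$ has at least one prime factor that does not already divide $q^m-1$ for any $m<n$.

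The first technical step is the standard dichotomy: if $\ell$ is a prime dividing $\Phi_n(q)$, then either (i) $\ell\nmid n$ and the order of $q$ modulo $\ell$ is exactly $n$, or (ii) $\ell\mid n$, in which case $\ell$ must be the largest prime divisor of $n$ and $\ell\,\|\,\Phi_n(q)$. This follows from an analysis of how $x^n-1$ factors modulo a prime $\ell$ dividing $n$, combined with the lifting-the-exponent style argument for the $\ell$-adic valuation of $\Phi_n(q)$. Hence it suffices to show $\Phi_n(q)>p$, where $p$ is the largest prime factor of $n$.

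The second step is a size comparison. Writing $\Phi_n(q)=\prod_{\zeta}(q-\zeta)$ over primitive $n$-th roots of unity $\zeta$ and using $|q-\zeta|\geq q-1$, one gets the rough bound $\Phi_n(q)\geq (q-1)^{\varphi(n)}$. For $q\geq 3$ and $n\geq 3$ this already beats $n$ comfortably except in a handful of small cases that can be checked directly. For $q=2$ the rough bound is too weak, so one needs the sharper estimate $|2-\zeta|^{2}=5-4\cos(2\pi k/n)$ and groups conjugate pairs of roots to obtain a useable lower bound on $\Phi_n(2)$; the only surviving exception is $(q,n)=(2,6)$, where $\Phi_6(2)=3$ divides $n$, reflecting that $2^6-1=63=3^2\cdot 7$ and both $3\mid 2^2-1$ and $7\mid 2^3-1$.

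Once the existence of $\ell$ is established, the addendum is short: by Fermat's little theorem the order of $q$ modulo $\ell$ divides $\ell-1$, and since this order equals $n$, we get $n\mid\ell-1$, i.e.\ $\ell\equiv 1\pmod{n}$; and if $\ell\mid q^k-1$, then $n=\mathrm{ord}_{\ell}(q)$ divides $k$. The main obstacle I expect is the case analysis in the size comparison: establishing tight enough lower bounds on $\Phi_n(q)$ for small $q$ (especially $q=2$) to isolate $(2,6)$ as the unique exception requires careful bookkeeping rather than a single slick estimate.
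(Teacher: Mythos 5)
The paper does not prove this lemma at all: it is quoted as a classical result with a citation to Zsigmondy's 1892 paper, so there is no internal proof to compare against. Judged on its own terms, your proposal follows the standard modern (Birkhoff--Vandiver style) proof via cyclotomic polynomials, and the skeleton is correct: the factorization $q^n-1=\prod_{d\mid n}\Phi_d(q)$, the dichotomy that a prime $\ell\mid\Phi_n(q)$ either has $\mathrm{ord}_\ell(q)=n$ or is the largest prime factor $p$ of $n$ and divides $\Phi_n(q)$ only to the first power (the caveat at $n=\ell=2$ is excluded by $n\geq 3$), the reduction to showing $\Phi_n(q)>p$, and the derivation of $\ell\equiv 1\pmod n$ and of $n\mid k$ from $\mathrm{ord}_\ell(q)=n$ via Fermat. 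These are all sound.

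The weakness is that the one step carrying the entire arithmetic content of the theorem --- the lower bound $\Phi_n(q)>p$ with $(2,6)$ isolated as the unique exception --- is only gestured at. For $q\geq 3$ the bound $\Phi_n(q)\geq (q-1)^{\varphi(n)}$ does the job after finitely many checks, as you say; but for $q=2$ that bound degenerates to $\Phi_n(2)\geq 1$, and the replacement you propose ($|2-\zeta|^2=5-4\cos(2\pi k/n)$, pairing conjugate roots) is named but not executed. Since $q=2$ is exactly the case where the exception lives and where the claimed inequality is delicate, a referee would regard the argument as a plan rather than a proof until that estimate is written out (for instance via $\Phi_n(2)\geq 2^{\varphi(n)}/\prod_{d\mid n, d<n}\Phi_d(2)$ or a careful product over conjugate pairs, followed by the finite list of small $n$). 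So: right route, correct reductions, but the decisive inequality still needs to be proved rather than asserted.
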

Such an $\ell$ is called a \emph{primitive prime divisor}. We denote by $\ell_n(q)$ the smallest
primitive prime divisor of $q^n-1$ for fixed $q$ and $n.$

\section{Real elements in simple groups}

Let $G$ be a group. We say that an element $x\in G$ is  \emph{good} if it is a real element of odd
prime power order whose class size is divisible by $4$ and the order of $x$ is prime to
$|\Center(G)|.$ Notice that if the center of $G$ is trivial, then the latter condition in the
definition above is trivially satisfied. In this section, we prove the following result.

\begin{proposition}\label{simple} Every nonabelian simple group $S$ possesses a good element.
\end{proposition}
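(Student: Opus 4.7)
The proof of Proposition~\ref{simple} goes by case analysis via the classification of finite simple groups, which I would carry out in three families: alternating groups, sporadic groups, and groups of Lie type. Since $\Center(S)=1$ whenever $S$ is a nonabelian simple group, the coprimality clause in the definition of a good element is automatic, so the task reduces to finding, in each simple $S,$ a real element of odd prime-power order whose $S$-class size is divisible by $4.$

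For alternating groups $S=\textrm{Alt}(n),$ $n\geq 5,$ my first candidate is a $3$-cycle $\sigma.$ It is real in $S$ because for $n\geq 5$ the double transposition $(2,3)(4,5)\in\textrm{Alt}(n)$ conjugates $\sigma$ to $\sigma^{-1},$ and its class size equals $n(n-1)(n-2)/3,$ which is divisible by $4$ unless $n\equiv 3\pmod 4.$ In the residual congruence class I would pass to a $5$-cycle (or more generally a cycle of odd prime length with at least two fixed points, which guarantees reality in $\textrm{Alt}(n)$ via a non-splitting argument) and verify by direct arithmetic that its $S$-class size has $2$-part at least $4.$

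Sporadic simple groups can be dispatched by inspection of the ATLAS character tables, picking in each case a real class whose representatives have odd prime-power order and whose size is visibly divisible by $4.$

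The main work---and the main obstacle---lies in the simple groups of Lie type $S=G(q).$ Here the plan is a Zsigmondy/torus argument. For each type I would choose a positive integer $n$ (the Coxeter number, or a Weyl-orbit length attached to a chosen maximal torus) for which the hypotheses of Lemma~\ref{Zsigmondy} hold, take $\ell=\ell_n(q),$ and consider an element $x\in S$ of order $\ell.$ Such an $x$ is regular semisimple, $\Cen_S(x)$ equals a maximal torus $T$ of $S,$ and $x$ is real because an appropriate Weyl element acts as inversion on $T,$ so $x$ and $x^{-1}$ are conjugate inside $\Norm_S(T).$ Then $|x^S|=|S|/|T|,$ and one estimates the $2$-parts to see that $|S|_2/|T|_2$ is divisible by $4.$ The delicate points I expect are: ensuring that the chosen torus actually admits an inverting Weyl element (failing which a different torus or a different prime power must be used); the Suzuki and Ree families $^2B_2(q),$ $^2G_2(q),$ $^2F_4(q),$ where $q$ is a power of $2$ or $3$ and the Zsigmondy polynomials have to be replaced by the appropriate cyclotomic factors attached to those twisted types; and small-rank classical groups such as $\textrm{PSL}_2(q)$ and $\textrm{PSU}_3(q),$ which I would handle separately by direct inspection of their real conjugacy class lists to pinpoint a good element by hand.
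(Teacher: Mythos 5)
Your overall architecture (CFSG, then alternating / sporadic / Lie type) is the same as the paper's, and your treatment of the first two families is sound: the $3$-cycle works except when $n\equiv 3\pmod 4$, and in that residual case your fallback $5$-cycle has class size $n(n-1)(n-2)(n-3)(n-4)/5$ with $2$-part at least $8$ because $4\mid n-3$; the paper simply uses the $5$-cycle $(1,2,3,4,5)$ for all $n\geq 5$. Working directly in $S$ (using $\Center(S)=1$ to kill the coprimality clause) rather than in the simply connected cover, as the paper does via Lemma~\ref{Centralizers}, is also legitimate.

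The gap is in the Lie-type case, and it sits exactly where you flagged ``delicate points'' without resolving them. Your plan rests on the claim that an element of Zsigmondy-prime order in a suitable maximal torus is real because ``an appropriate Weyl element acts as inversion on $T$.'' By the Tiep--Zalesski result the paper cites, every semisimple element of the simply connected group is real \emph{except} in types $\textrm{A}_n$ ($n>1$), $\textrm{D}_{2n+1}$ and $\textrm{E}_6$ --- and in those types the failure is not something one can wave away by ``choosing a different torus or prime power.'' Concretely, in $\textrm{SL}_n(q)$ with $n$ odd, an element of order $\ell_n(q)$ generating the Coxeter torus has eigenvalues $\lambda^{q^i}$, and $\lambda^{-1}$ occurs among them only if $\ell_n(q)\mid q^i+1$ for some $i<n$, which forces $n\mid 2i$ --- impossible for odd $n$; so the most natural Zsigmondy element is simply not real. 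The paper's proof does genuine case-specific work precisely here: for $\textrm{SL}^\epsilon_n(q)$ with $q$ odd it abandons semisimple elements altogether and uses a real element of $p$-power order (for the defining characteristic $p$) taken from Guralnick--Navarro--Tiep; for $\textrm{SL}_n(2^f)$ and $\textrm{SU}_3(2^f)$ it builds real elements from an embedded $\textrm{SL}_2(q)$; for $\textrm{SU}_n(2^f)$, $n\geq 4$, it uses a real element of an embedded $\textrm{Sp}_m(q)$; for $\textrm{Spin}_{2n}(q)$ with $n$ odd it embeds $\textrm{Spin}^-_{2n-2}(q)$; and for $\textrm{E}_6^\epsilon(q)$ it embeds $\textrm{F}_4(q)$. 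Your proposal names the obstruction but supplies no construction for these types, and that is where the actual content of the proposition lies; the divisibility-by-$4$ estimates, by contrast, are routine once the elements are in hand.
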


\begin{proof} Using the classification of simple groups, we consider the following cases.

{\bf Case 1:}  $S\cong \textrm{Alt}(n),$ an alternating group with degree $n\geq 5.$ Let
$x=(1,2,3,4,5)$ and $g=(2,5)(3,4)$ be two elements in $S.$ We can see that $o(x)=5$ and
$x^g=x^{-1},$ so $x$ is a real element of order $5.$ We have that $\Cen_S(x)\cong \ZZ_5,$ if $5\leq
n\leq 6;$ and $\Cen_S(x)\cong \ZZ_5\times \textrm{Alt}(n-5),$ if $n\geq 7.$
 Thus \[|x^S|=\left\{
                                                                \begin{array}{ll}
                                                                 \frac{1}{5}n(n-1)(n-2)(n-3)(n-4) , & \hbox{if $n\geq 7$;} \\
                                                                 \frac{1}{10}n(n-1)(n-2)(n-3)(n-4), & \hbox{if $n\in\{5,6\}$.}
                                                                \end{array}
                                                              \right.
\] It follows that $4\mid |x^S|$ in any case, so $x$ is a good
element in $S.$

\smallskip
{\bf Case 2:} $S$ is a sporadic simple group or the Tits group. These cases can be checked directly
by using \cite{Atlas}.

\smallskip
We now consider the following set up. Let $S$ be a simple group of Lie type in characteristic $p$
defined over a field of size $q=p^f$ with $S\ncong {}^2\textrm{F}_4(2)'.$ Let $\AG$ be a simple,
simply connected algebraic group in characteristic $p$ and let $F:\AG\rightarrow \AG$ be a
Frobenius map such that if $G:=\AG^F,$ then $S\cong G/\Center(G).$ It follows from
\cite[Proposition~3.1]{TZ} that every semisimple element of $G$ is real unless $\AG$ is of type
$\textrm{A}_n$ with $n>1$, $\textrm{D}_{2n+1}$ or $\textrm{E}_6.$ We will use the bar convention in
$G/Z(G).$ By Lemma \ref{Centralizers}, if $x$ is a good element in $G,$ then $\bar{x}$ is a good
element in $S.$ Thus we need to find an odd prime power order real element $x\in G$ such that
$o(x)$ is prime to $|\Center(G)|$ and $4$ divides $|x^G|.$ For $\epsilon=\pm,$ we use the
convention that $\textrm{SL}_n^\epsilon(q)$ is $\textrm{SL}_n(q)$ if $\epsilon=+$ and
$\textrm{SU}_n(q)$ if $\epsilon=-;$ similar convention applies to $\textrm{GL}_n^\epsilon(q).$ We
also write ${\textrm{E}_6}^+(q)$ for $\textrm{E}_6(q)$ and $\textrm{E}_6^-(q)$ for
${}^2\textrm{E}_6(q).$

\smallskip
{\bf Case 3:} Assume that $\AG$ is of type $\textrm{A}_{n-1}$ with $n\geq 2.$ Then $S\cong
\textrm{PSL}^\epsilon_n(q)$ and $G\cong \textrm{SL}_n^\epsilon(q)$ with $n\geq 2,$ $q=p^f$ and
$\epsilon=\pm.$


$(1)$  Assume that $S\cong \textrm{PSL}_2(q),$ where $q\geq 5$ is odd. As $\textrm{PSL}_2(4)\cong
\textrm{PSL}_2(5)\cong \textrm{Alt}(5),$ we can assume that $q\geq 7.$ We know that $S$ has
dihedral subgroups of order $q\pm 1$ with cyclic subgroups of order $(q\pm 1)/2.$ Firstly, assume
that $q\equiv 1$ (mod 4). Then $(q+1)/2$ is odd and hence it has an odd prime divisor $r.$ It
follows that $S$ has a real element $x$ of order $r$ with $|\Cen_S(x)|=(q+1)/2.$ Hence
$|x^S|=q(q-1)$ which is divisible by $4$ since $q\equiv 1$ (mod 4). Finally, assume that $q\equiv
3$ (mod 4). In this case $(q-1)/2$ is odd and $q(q+1)$ is divisible by $4.$ Now let $x$ be a real
element of odd prime order $r\mid (q-1)/2$ and apply the argument above, we deduce that $x$ is
good.

$(2)$ Assume that  $n\geq 3$ and $q$ is odd. It follows from the proof of \cite[Lemma~4.4]{GNT}
that $G\cong \textrm{SL}_n^\epsilon(q)$ has a real element $x$ of $p$-power order with class size
$$|x^G|=q^{n(n-3)/2}(q+\epsilon 1)\prod_{i=3}^n(q^n-\epsilon^n 1).$$ Since
$|\Center(G)|=\gcd(n,q-\epsilon 1)$ is prime to $p,$  it suffices to show that $|x^G|$ is divisible
by $4.$ We now have that $|x^G|$ is divisible by $(q+\epsilon 1)(q-\epsilon 1)=q^2-1.$ Since $q$ is
odd, we deduce that $q^2-1$ is divisible by $4$ so is $|x^G|.$ Hence $x$ is good in $G.$

$(3)$ Assume $S\cong \textrm{PSL}_n(q)$ with $n\geq 2, q=2^f.$ Embed $\textrm{SL}_2(q)\times
\textrm{SL}_{n-2}(q)$ in $G=\textrm{SL}_n(q).$ Let $B\in \textrm{SL}_2(q)$ such that $B$ is of
prime power order and $o(B)\mid q+1.$ Define
\[x=\left(
      \begin{array}{cc}
        B & 0 \\
        0 & I_{n-2} \\
      \end{array}
    \right),
\] with $I_{n-2}\in \textrm{SL}_{n-2}(q)$ being the identity matrix. Since $B$ is a semisimple real element of
$\textrm{SL}_2(q),$ $x$ is a semisimple real element of $\textrm{SL}_n(q).$ Now for $n=2$ we have
$q\geq 4,$ so $|x^G|=q(q-1)$ is divisible by $4.$ Finally, for $n\geq 3$ we have
$|x^G|=t|B^{\textrm{SL}_2(q)}|,$ where $t$ denotes the number of decompositions of an
$n$-dimensional vector space over $\mathbb{F}_q$ into the direct sum of a $2$-dimensional and an
$n-2$-dimensional subspaces. Easy calculation shows that
\[t=\dfrac{|\textrm{GL}_n(q)|}{|\textrm{GL}_2(q)|\cdot
|\textrm{GL}_{n-2}(q)|}=q^{2(n-2)}\dfrac{(q^n-1)(q^{n-1}-1)}{(q-1)(q^2-1)},\] so $4\mid t$  and
thus $4\mid |x^G|$ as required. Furthermore, $\gcd(o(x),|\Center(G)|)=1,$ therefore $x$ is a good
element in $G.$

$(4)$ Assume $G\cong \textrm{SU}_n(q)$ with $n\geq 3$ and $q=2^f.$ Assume first that $n=3.$ Since
$\textrm{PSU}_3(2)$ is not simple, we can assume that $q\geq 4.$ Let $B\in
\textrm{SU}_2(q)=\textrm{SL}_2(q)$ such that $B$ is of prime power order and $o(B)\mid q-1.$ Embed
$\textrm{SU}_2(q)$ in $G$ and let

\[x=\left(
      \begin{array}{cc}
        B & 0 \\
        0 & 1 \\
      \end{array}
    \right).
\] Then $x$ is a semisimple real element of $G.$ We have that
$|x^G|=t|B^{\textrm{SU}_2(q)}|,$ where $t$ denotes the number of decompositions of a
$3$-dimensional non-degenerate unitary space over $\mathbb{F}_{q^2}$ into the orthogonal direct sum
of a $2$-dimensional and a $1$-dimensional subspaces and
\[t=\dfrac{|\textrm{GU}_3(q)|}{|\textrm{GU}_2(q)|\cdot
|\textrm{GU}_{1}(q)|}=q^{2}\dfrac{(q^3+1)(q^{2}-1)}{(q+1)(q^2-1)},\] so $4\mid t$  and thus $4\mid
|x^G|$ as required. Since $o(x)=o(B)\mid q-1$ and $|\Center(G)|=\gcd(3,q+1),$ we see that
$\gcd(o(x),|\Center(G)|)=1,$ so $x$ is a good element in $G.$

Assume next that $n\geq 4.$ Let $m=2\lfloor n/2\rfloor\geq 4.$ Assume first that $(n,q)\neq
(6,2),(7,2).$ Then $\ell_{mf}(2)>2$ exists. It follows from the proof of Lemma~3.5 in \cite{DNT}
that there exists a real element $x$ of order $\ell_{mf}(2)$ with $x\in Sp_m(q)\leq G$ and

$$|\Cen_G(x)|=\left\{
\begin{array}{ll}
  (q^n-1)/(q+1),                &\mbox{if } n\equiv 0~ \mbox{(mod 4);}    \\
    (q^{n/2}+1)^2/(q+1),                &\mbox{if } n\equiv 2~ \mbox{(mod 4);}    \\
      (q^{n-1}-1),                &\mbox{if } n\equiv 1~ \mbox{(mod 4);}    \\
        (q^{(n-1)/2}+1)^2,                &\mbox{if } n\equiv 3~ \mbox{(mod 4).}

\end{array}
\right.$$

Hence $|x^G|$ is divisible by $q^{n(n-1)/2}$ and so by $4.$ Moreover $|\Center(G)|=\gcd(n,q+1)$
dividing $q^2-1=2^{2f}-1$ and thus $\ell_{mf}(2)$ cannot divide $|\Center(G)|$ as $m\geq 4.$
Therefore, $x$ is good in $G.$ Assume next that $(n,q)=(6,2).$ Using \cite{GAP}, $S$ has a real
element $x$ of order $7$ with $|\Cen_S(x)|=7.$ Obviously $4$ divides $|x^S|$ and thus ${x}$ is good
in $S.$ Assume now that $(n,q)=(7,2).$ We can embed $\textrm{PSU}_6(2)$ in $\textrm{PSU}_7(2)$ and
we can choose $x$ to be a real element of order $7$ as in the previous case. In this case, we have
$|\Cen_S(x)|=63$ and so $4$ divides $|x^S|.$ Hence $x$ is good in $S.$

\smallskip
{\bf Case 4:} Assume $\AG$ is of type $\textrm{B}_n$ or $\textrm{C}_n$ with $n\geq 2.$ Then $G\cong
\textrm{Spin}_{2n+1}(q)$ or $\textrm{Sp}_{2n}(q).$ In both cases, we have
$|\Center(G)|=\gcd(2,q-1).$ Assume that $(n,q)\neq (3,2).$ By \cite[Lemma~2.4]{MT}, $G$ has a
semisimple element $x$ of order $\ell_{2nf}(p)$ with $|\Cen_G(x)|=q^n+1.$ As $\ell_{2n}(q)>2$ we
deduce that $o(x)$ is prime to $|\Center(G)|.$ We need to verify that $|x^G|$ is divisible by $4.$
If $q$ is even then $|x^G|$ is divisible by $q^{n^2}$ and so by $4$ as $n\geq 2.$ Now assume that
$q$ is odd.  In this case, $|x^G|$ is always divisible by $q^2-1.$ Since $q$ is odd, $q^2-1$ is
divisible by $4,$ so $|x^G|$ is divisible by $4.$ Now assume $(n,q)=(3,2).$ Then $G\cong S\cong
\textrm{Sp}_6(2).$ Using \cite{Atlas}, we can check that $G$ has a semisimple element of order $7$
whose class size is divisible by $4.$

\smallskip
{\bf Case 5:} Assume that $\AG$ is of type $\textrm{D}_n$ with $n\geq 4.$ Then $G\cong
\textrm{Spin}_{2n}^\epsilon(q)$ with $n\geq 4$ and $\epsilon=\pm.$ Assume first that $n$ is even
and $(n,q)\neq (4,2).$ By \cite[Lemma~2.4]{MT}, $G$  has a semisimple element $x$ of order
$\ell_{2(n-1)f}(p)$ with $|\Cen_G(x)|=(q^{n-1}+1)(q+\epsilon 1).$ As $n\geq 4,$ it is easy to check
that $|x^G|$ is divisible by $4.$ Moreover, as $|\Center(G)|=\gcd(4,q^n-\epsilon 1)$ and
$o(x)=\ell_{2(n-1)f}(p)$ is odd, we deduce that the order of $x$ is prime to $|\Center(G)|.$ Now
assume that $(n,q)=(4,2).$ Then $S\cong \textrm{P}\Omega_8^\epsilon(2)$ and the results follows
easily by checking \cite{Atlas}.

Assume that $n\geq 5$ is odd. Embed $H=\textrm{Spin}_{2n-2}^-(q)$ in $G.$ By \cite[Lemma~2.4]{MT},
there exists a semisimple element $x\in H$ of order $\ell_{(2n-2)f}(p)$ with
$|\Cen_H(x)|=q^{n-1}+1.$ Note that $x$ is real in $H$ as $n-1$ is even. So, $x$ is real in $G.$ As
in the previous case, the order of $x$ is prime to $|\Center(G)|.$ Furthermore, we have that
$|\Cen_G(x)|=(q^{n-1}+1)(q+\epsilon 1).$ It is now easy to check that $|x^G|$ is divisible by $4.$

\smallskip
{\bf Case 6:} Assume $\AG$ is of exceptional type.

$(1)$ Assume that $G\cong {}^2\textrm{G}_2(q)$ with $q=3^{2m+1},m\geq 1.$ Then $|\Center(G)|=1.$ By
\cite[Lemma~2.3]{MT}, $G$ has a  semisimple element $x$ of order $\ell_{2m+1}(3)$ with
$|\Cen_G(x)|=q-1.$ Now $|x^G|=q^3(q^3+1)$ is divisible by $q+1=3^{2m+1}+1.$ Since $3^{2m+1}+1$ is
divisible by $4,$ we deduce that $|x^G|$ is divisible by $4$ as required.

Similar argument applies to the group ${}^2\textrm{B}_2(q)$ and ${}^2\textrm{F}_4(q),$ where
$q=2^{2m+1}$ and $m\geq 1,$ with semisimple elements of order $\ell_{2m+1}(2)$ and
$\ell_{6(2m+1)}(2),$ respectively.

$(2)$ Assume $\AG$ is of type $\textrm{E}_6.$ We have that $G\cong \textrm{E}_6^\epsilon(q)_{sc}$
with $\epsilon=\pm$ and $|\Center(G)|=(3,q-\epsilon 1).$ Embed $H=\textrm{F}_4(q)$ in $G.$ By
\cite[Lemma~2.3]{MT}, $H$ has a semisimple and hence real element of order $\ell_{12f}(p)$ with
$|\Cen_H(x)|=\Phi_{12}(q).$ As $|\Center(G)|\leq 3$ and $\ell_{12f}(p)\geq 13$ by
Lemma~\ref{Zsigmondy},  we see that $o(x)$ is prime to $|\Center(G)|.$ Now it follows from the
proof of  \cite[Lemma~2.3]{MT} that $|\Cen_G(x)|=\Phi_{12}(q)(q^2+\epsilon q+1).$ Since
$\Phi_{12}(q)(q^2+\epsilon q+1)=(q^4-q^2+1)(q^2+\epsilon q+1)$ is always odd and
$|\textrm{E}_6^\epsilon(q)|$ is divisible by $4,$ we obtain that $|x^G|$ is divisible by $4.$

$(3)$ Assume $\AG$ is of type $\textrm{E}_7.$ Then $|\Center(G)|=(2,q-1).$ By \cite[Lemma~2.3]{MT},
$G$ has a semisimple element $x$ of order $\ell_{mf}(p)$ with $|\Cen_G(x)|=\Phi_{18}(q)\Phi_2(q),$
where $m=18.$ Observe first that $o(x)$ is prime to $|\Center(G)|.$ Now if $q$ is even, then
$|x^G|$ is divisible by $q^{63}$ and hence by $4.$ Otherwise, if $q$ is odd, then $|x^G|$ is
divisible by $q^2-1.$ As $q$ is odd, we deduce that $q^2-1$ is divisible by $4.$

The same argument applies to the groups $\textrm{E}_8(q), \textrm{F}_4(q),{}^3\textrm{D}_4(q)$ and
$\textrm{G}_2(q) (q>2),$ where $m=30,12,12$ and $6,$ respectively. The proof is now complete.
\end{proof}

\section{Arithmetic conditions on real class sizes}

We say that a group $G$ has property $\T$  if the conjugacy class size of every odd prime power
order real element in $G$ is  not divisible by $4,$ i.e., if $x$ is a real element of order $p^a$
for some odd prime $p,$ then $4\nmid |x^G|.$ Similarly, a group $G$ has property $\WT$ if the
conjugacy class size of every odd prime power order real element in $G$ is either a $2$-power or
not divisible by $4.$ It follows from the definitions that if $G$ has property $\T,$ then it has
property $\WT.$

\begin{lemma}\label{inherit}
Suppose that $N\unlhd G$ and that $G$ has property $\T$ or $\WT$. Then both $N$ and $G/N$ have
property $\T$ or $\WT,$ respectively.
\end{lemma}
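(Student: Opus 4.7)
The plan is to verify the two claims separately, using only Lemma~\ref{conjsizes} (divisibility of class sizes through $N$) together with Lemma~\ref{liftpower} (lifting of odd prime power order real elements from $G/N$ to $G$). The underlying observation is that both properties $\T$ and $\WT$ are preserved under divisors: any divisor of a number not divisible by $4$ is still not divisible by $4$, and any divisor of a $2$-power is a $2$-power. So the strategy is to show that for each odd prime power order real element in $N$ (resp.\ $G/N$), its class size divides the $G$-class size of some odd prime power order real element of $G$.

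For the subgroup $N$, the argument is essentially immediate. If $x \in N$ is a real element of $N$ of odd prime power order, then a witnessing $N$-conjugacy $x^n = x^{-1}$ is in particular a $G$-conjugacy, so $x$ is real in $G$ of the same order. By Lemma~\ref{conjsizes}(1), $|x^N|$ divides $|x^G|$, and the hypothesis on $G$ transfers at once to $N$.

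For the quotient $G/N$ the lifting step is the main point. Let $Nx \in G/N$ be a real element of order $p^a$ with $p$ an odd prime. Since $p$ is odd, Lemma~\ref{liftpower} supplies a $p$-power order real element $y \in G$ with $Ny = Nx$. If $y = 1$ then $Nx$ is trivial and there is nothing to prove; otherwise $y$ is an odd prime power order real element of $G$, so the hypothesis on $G$ applies to $|y^G|$. By Lemma~\ref{conjsizes}(2), $|(Nx)^{G/N}| = |(Ny)^{G/N}|$ divides $|y^G|$, and the divisor-preservation observation finishes the proof for both $\T$ and $\WT$.

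The only genuinely nontrivial ingredient is Lemma~\ref{liftpower}; without it, a real element of $G/N$ of odd prime power order need not come from a real element of $G$ of matching $p$-power order, and the hypothesis on $G$ could not be invoked. Once this lifting is in hand, everything else is a routine application of the elementary divisibility statements in Lemma~\ref{conjsizes}.
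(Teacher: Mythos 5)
Your proof is correct and follows essentially the same route as the paper's: realness in $N$ implies realness in $G$ together with Lemma~\ref{conjsizes}(1) for the subgroup, and Lemma~\ref{liftpower} plus Lemma~\ref{conjsizes}(2) for the quotient. The divisor-preservation observation and the identification of Lemma~\ref{liftpower} as the key ingredient match the paper exactly.
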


\begin{proof}
Assume that $N\unlhd G$ and $x\in N.$ Suppose that $x$ is an odd prime power order real element in
$N.$ Then clearly $x$ is an odd prime power order real element in $G.$ Since $|x^N|$ divides
$|x^G|$ by Lemma~\ref{conjsizes}(1), we deduce that if $4\nmid |x^G|,$ then $4\nmid |x^N|;$ and if
$|x^G|$ is a $2$-power, then so is $|x^N|.$ This shows that if $G$ has property $\T$ or $\WT,$ then
$N$ also satisfies the same property. Assume now that $Nx$ is an odd prime power order real element
in $G/N.$ By Lemma~\ref{liftpower}, $Nx=Ny$ for some odd prime power order real element $y\in G.$
By Lemma~\ref{conjsizes}(2) we obtain that $$|(Nx)^{G/N}|=|(Ny)^{G/N}|$$ divides $|y^G|.$ Clearly,
if $|y^G|$ is a $2$-power or $4\nmid |y^G|,$ then $|(Nx)^{G/N}|$ is also a $2$-power or $4\nmid
|(Nx)^{G/N}|,$ respectively. Therefore, we conclude that if $G$ has property $\T$ or $\WT,$ then
$G/N$ has property $\T$ or $\WT,$ respectively.
\end{proof}

Now we prove Theorem~A which we restate here.

\begin{theorem} If $G$ has property $\WT,$ then $G$ is solvable.
\end{theorem}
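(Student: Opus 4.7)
The plan is to argue by minimal counterexample, using Lemma~\ref{inherit} to reduce to the non-abelian simple case and then invoking Proposition~\ref{simple} together with Burnside's classical theorem on prime-power class sizes to reach a contradiction.

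First, I would suppose for contradiction that there exists a non-solvable group with property $\WT$, and choose such a $G$ of smallest possible order. If $N$ were a proper non-trivial normal subgroup of $G$, then Lemma~\ref{inherit} guarantees that both $N$ and $G/N$ inherit property $\WT$; by the minimality of $|G|$, both would be solvable, and hence $G$ would be solvable as an extension of solvable by solvable, contrary to the choice of $G$. Therefore $G$ has no proper non-trivial normal subgroup, and combined with non-solvability this forces $G$ to be a non-abelian simple group.

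Next, I would apply Proposition~\ref{simple} to extract a good element $x\in G$: an odd prime-power order real element of $G$ whose class size $|x^G|$ is divisible by $4$ (the coprimality condition on $o(x)$ and $|\Center(G)|$ is vacuous, since $\Center(G)=1$ for $G$ non-abelian simple). Property $\WT$ applied to $x$ says that $|x^G|$ is either not divisible by $4$ or is a $2$-power; since $4\mid |x^G|$ by construction, the only surviving alternative is that $|x^G|$ is a non-trivial power of $2$.

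Finally, I would invoke Burnside's classical theorem on conjugacy classes, which asserts that in a non-abelian finite simple group no non-trivial conjugacy class has prime-power size. Since $|x^G|$ is a non-trivial $2$-power in the simple group $G$, this is the desired contradiction. The substantive content has already been absorbed into Proposition~\ref{simple} (where the classification of finite simple groups is used); the present argument is a clean funneling, and the only external ingredient is Burnside's theorem, which is entirely standard. Hence there is no real obstacle beyond ensuring that the good element produced by Proposition~\ref{simple} really does violate property $\WT$, which it does precisely because its class size must then be forced into the ``$2$-power'' alternative that Burnside forbids.
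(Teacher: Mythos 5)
Your proposal is correct and follows essentially the same route as the paper: reduce to a non-abelian simple group via Lemma~\ref{inherit} and induction/minimal counterexample, then combine Proposition~\ref{simple} with Burnside's theorem on prime-power class sizes (the paper cites it as \cite[Lemma~4.3.2]{Gorenstein}) to rule out the $2$-power alternative and reach a contradiction. No gaps.
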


\begin{proof}
We prove this theorem by induction on the order of $G.$ Assume first that $G$ has a nontrivial
normal subgroup $N$ such that $N\neq G.$ By Lemma~\ref{inherit}, both $N$ and $G/N$ are solvable by
induction hypotheses. Hence $G$ is solvable. Thus we can assume that $G$ is nonabelian simple. By
\cite[Lemma~4.3.2]{Gorenstein}, $\{1\}$ is the only conjugacy class of $G$ which has  prime power
size. Therefore, $4$ does not divide the conjugacy class size of any odd prime power order real
element in $G.$ Now Proposition~\ref{simple} yields a contradiction. This contradiction shows that
$G$ is solvable.
\end{proof}
Here is a direct consequence of the theorem above.
\begin{corollary}\label{solvableT} If $G$ has property $\T,$ then $G$ is solvable.
\end{corollary}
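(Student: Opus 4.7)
The plan is essentially a one-line reduction. The definitions in the section make clear that property $\T$ is simply the restriction of property $\WT$ obtained by forbidding the extra ``$2$-power'' option: if the class size of every odd prime power order real element is not divisible by $4$, then \emph{a fortiori} that class size is either not divisible by $4$ or a $2$-power. Thus any group with property $\T$ automatically has property $\WT$.

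Therefore the entire proof consists of invoking the theorem just proved (Theorem~A restated in terms of $\WT$). I would write one sentence observing the implication $\T \Rightarrow \WT$, and then conclude solvability directly from the theorem. No induction, no appeal to the classification beyond what was already used for Theorem~A, and no further case analysis are needed.

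Since the reduction is immediate, there is no genuine obstacle; the only point worth being careful about is to make the definitional implication explicit so that the reader sees why Theorem~A applies verbatim. One could alternatively note that the corollary is logically contained in Theorem~A, but stating it separately is useful because Theorem~B in the introduction is phrased in terms of property $\T$, and this corollary is the cleanest way to record the solvability consequence before turning to the finer structural statement.
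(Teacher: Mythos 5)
Your proposal is correct and matches the paper exactly: the paper labels this a direct consequence of the preceding theorem, having already noted after the definitions that property $\T$ implies property $\WT$. Nothing further is needed.
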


Let $p$ be a prime. Recall that a group $G$ is said to be $p$-closed  if some Sylow $p$-subgroup of
$G$ is normal in $G;$ and $G$ is said to be $p$-nilpotent if it has a normal $p$-complement. We
denote by $\oo^{p'}(G)$ the smallest normal subgroup of $G$ such that the quotient $G/\oo^{p'}(G)$
is a $p'$-group, that is, its order is prime to $p.$ Also $\oo_p(G)$ is the largest normal
$p$-subgroup of $G.$ Furthermore, if $G$ is solvable and $\oo_{p'}(G)=1,$ then
$\Cen_G(\oo_p(G))\subseteq \oo_p(G).$

\begin{proposition}\label{2nilpotent}
Let $G$ be a group. Suppose that $G$ has property $\T$ and that $G=\oo^{2'}(G).$ Then $G$ is
$2$-nilpotent.
\end{proposition}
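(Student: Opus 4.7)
The plan is to induct on $|G|$ and work with a minimal counterexample, which is solvable by Corollary~\ref{solvableT}. I first reduce to $\oo_{2'}(G)=1$: if $\oo_{2'}(G)\neq 1$ then $G/\oo_{2'}(G)$ inherits property $\T$ by Lemma~\ref{inherit} and still equals its own $\oo^{2'}$ (since $G=\oo^{2'}(G)$ implies $\oo^{2'}(G/\oo_{2'}(G))=\oo_{2'}(G)\cdot\oo^{2'}(G)/\oo_{2'}(G)=G/\oo_{2'}(G)$), so induction makes it $2$-nilpotent; the preimage of its normal $2$-complement is an extension of the $2'$-group $\oo_{2'}(G)$ by a $2'$-group, hence a normal $2$-complement of $G$, a contradiction. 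With $\oo_{2'}(G)=1$ and $G$ solvable, $P:=\oo_2(G)\neq 1$ and $\Cen_G(P)\leq P$. The strategy is now to show $G$ has no nontrivial real element of odd order. Once this is done, Lemma~\ref{NoOddOrder} produces a normal Sylow $2$-subgroup $T$, and since $G=\oo^{2'}(G)$ has no nontrivial odd-order quotient, $G=T$ must be a $2$-group — trivially $2$-nilpotent — contradicting the choice of $G$.

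Suppose for contradiction that $y\in G$ is a nontrivial real element of odd prime power order $p^k$. Property $\T$ gives $4\nmid |y^G|$, and Lemma~\ref{real}(1) rules out $|y^G|$ odd, so $|y^G|_2=2$. Because $y$ is real, some element of $\Norm_G(\la y\ra)$ inverts $y$ and so projects to an involution in $\Norm_G(\la y\ra)/\Cen_G(y)$; writing $|y^G|=|G{:}\Norm_G(\la y\ra)|\cdot |\Norm_G(\la y\ra){:}\Cen_G(y)|$ and comparing $2$-parts forces $|G{:}\Norm_G(\la y\ra)|$ odd and $|\Norm_G(\la y\ra){:}\Cen_G(y)|_2=2$. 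Hence some Sylow $2$-subgroup $T$ of $G$ lies in $\Norm_G(\la y\ra)$ with $|T{:}T\cap\Cen_G(y)|=2$.

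The heart of the argument is to show that $y$ centralizes every chief factor $P_i/P_{i-1}$ of $G$ contained in $P$, each of which is elementary abelian of characteristic $2$. Passing to $\bar G:=G/P_{i-1}$, which still satisfies the proposition's hypotheses by Lemma~\ref{inherit}, the image $\bar y$ is nontrivial (since $P_{i-1}$ is a $2$-group and $y$ has odd order), real, and of odd prime power order. Rerunning the preceding paragraph inside $\bar G$, a Sylow $2$-subgroup $\bar T$ of $\bar G$ normalizes $\la\bar y\ra$ and contains the normal $2$-subgroup $P_i/P_{i-1}$; the identical index-$2$ calculation inside $\bar T$ then yields $|P_i/P_{i-1}{:}\Cen_{P_i/P_{i-1}}(\bar y)|\leq 2$. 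Coprime action decomposes $P_i/P_{i-1}=[\bar y,P_i/P_{i-1}]\oplus\Cen_{P_i/P_{i-1}}(\bar y)$, and if the commutator summand were of order $2$ then $\bar y$ would act trivially on it (as $\mathrm{Aut}(\ZZ_2)=1$), contradicting the direct-sum property that $\Cen_{[\bar y,P_i/P_{i-1}]}(\bar y)=0$. Hence $[y,P_i]\leq P_{i-1}$.

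Finally, the stability theorem for coprime action of the odd-order element $y$ on the $2$-group $P$ promotes ``trivial on every chief factor'' to ``$y$ centralizes $P$''. Then $y\in\Cen_G(P)\leq P$, which forces $y=1$ since $y$ has odd order in a $2$-group — the desired contradiction. The main obstacle I anticipate is the chief-factor step: one must verify carefully that the Sylow-$2$-normalization set-up survives passage to every quotient $G/P_{i-1}$ (which hinges on Lemma~\ref{inherit} transporting property $\T$ to quotients so that $|\bar y^{\bar G}|_2=2$ is re-derivable), and then invoke coprime stability rather than merely minimal-normal-subgroup arguments in order to conclude at the level of $P$ itself rather than one factor at a time.
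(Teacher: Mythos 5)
Your argument is correct, but after the opening moves it takes a genuinely different route from the paper's. You share the same skeleton: solvability via Corollary~\ref{solvableT}, reduction to $\oo_{2'}(G)=1$ by induction on $G/\oo_{2'}(G)$, and the key observation that $4\nmid|y^G|$ together with Lemma~\ref{real}(1) forces $|y^G|_2=2$, which places a full Sylow $2$-subgroup inside a subgroup containing an inverting element. From there the paper finishes in two lines: it works with the generalized centralizer $\Cen_G^*(x)=\{g\in G: x^g\in\{x,x^{-1}\}\}$, notes that $U=\oo_2(G)$ lies inside it, and since $U\not\le\Cen_G(x)$ (else $x\in\Cen_G(U)\le U$) some $u\in U$ inverts $x$, whence $x^{-2}=x^{-1}x^u=[x,u]\in U\cap\la x\ra=1$, an immediate contradiction with $o(x)$ odd. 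You instead push the index-$2$ bound down to every chief factor $V$ of $G$ inside $\oo_2(G)$, use the coprime decomposition $V=[V,\bar y]\times\Cen_V(\bar y)$ together with $|{\rm Aut}(\ZZ_2)|=1$ to kill each $[V,\bar y]$, and then invoke the stability theorem for coprime action to conclude $y\in\Cen_G(\oo_2(G))\le\oo_2(G)$. Both are valid; yours costs the coprime-action and stability machinery where the paper's commutator identity does the job instantly, though your version makes visible exactly where the hypothesis enters (an index-$2$ bound on each chief factor). Two small points to tidy: you announce that $G$ has no nontrivial real element of odd order but only rule out elements of odd \emph{prime power} order --- Lemma~\ref{real}(2) closes this, since an odd-order real element powers to a real element of odd prime order; and the step $|T:T\cap\Cen_G(y)|=2$ tacitly uses that $\Cen_G(y)$ is normal in $\Norm_G(\la y\ra)$, which holds because any element normalizing $\la y\ra$ sends $y$ to a generator, so it is worth a sentence.
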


\begin{proof}
Assume that $G=\oo^{2'}(G)$ and that the conjugacy class size of every odd prime power order real
element of $G$ is not divisible by $4.$ If $G$ is a $2$-group, then the conclusion is trivially
true. Hence we may assume that $G$ is not a $2$-group. Clearly the Sylow $2$-subgroup of $G$ is not
normal in $G$ since $G$ has no nontrivial factor group of odd order. By Corollary \ref{solvableT},
we know that $G$ is solvable. We will show that $G$ has a normal $2$-complement by induction on
$|G|.$ We consider the following cases.

$(1)$ Assume first that $\oo_{2'}(G)= 1.$  Let $U:=\oo_2(G)\unlhd G.$ As $G$ is solvable, by
\cite[Theorem~6.3.2]{Gorenstein} we have that $\Cen_G(U)\leq U.$ In particular, $U$ is nontrivial.
As $G$ is not $2$-closed,  $G$ possesses some nontrivial odd order real element by
Lemma~\ref{NoOddOrder}. Applying Lemma~\ref{real}(2), there is a real element $x\in G$ of odd prime
order $p.$ Let the generalized centralizer of $x$ in $G$ defined by
\[\Cen_G^*(x):=\{g\in G\:|\:x^g\in\{x,x^{-1}\}\}\leq G.\] Then $\Cen_G(x)\unlhd \Cen_G^*(x)$ and
$|\Cen_G^*(x):\Cen_G(x)|=2.$ Since \[|x^G|=|G:\Cen_G^*(x)|\cdot
|\Cen_G^*(x):\Cen_G(x)|=2|G:\Cen_G^*(x)|\] and $4\nmid |x^G|$ by our hypothesis, we obtain that
$|G:\Cen_G^*(x)|$ is odd. It follows that $P\leq \Cen_G^*(x)$ for some Sylow $2$-subgroup $P$ of
$G.$ As $U\unlhd G,$ we have that $U\leq P\leq \Cen_G^*(x).$ If $U\leq \Cen_G(x),$ then
$x\in\Cen_G(U)\leq U,$ a contradiction. Therefore, there exists an element $u\in U$ such that
$x^u=x^{-1}.$ It follows that $x^{-2}=x^{-1}x^u=[x,u]\in U.$ As $\la x\ra \cap U=1,$ we deduce that
$x^{-2}=1,$ which is impossible. This contradiction shows that $\oo_{2'}(G)$ is nontrivial.

$(2)$ Assume that $\oo_{2'}(G)\neq 1.$ Let $\bar{G}=G/\oo_{2'}(G).$ Observe that
$\oo^{2'}(\bar{G})=\bar{G}.$ Also $\bar{G}$ has property $\T$ by Lemma~\ref{inherit}. Thus
$\bar{G}$ has a normal $2$-complement by induction and so  $G$ has a normal $2$-complement as
required.
\end{proof}

We now prove Theorem~B which we restate here for the reader's convenience.

\begin{theorem}\label{2closed}
Let $G$ be a finite group. If $G$ has property $\T,$ then $\oo^{2'}(G)$ is $2$-nilpotent and
$G/\oo_{2'}(G)$ is $2$-closed.
\end{theorem}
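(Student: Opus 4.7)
The plan is to deduce both assertions from Proposition~\ref{2nilpotent} by routine manipulation of the operators $\oo^{2'}$ and $\oo_{2'}$. First I would set $H:=\oo^{2'}(G)$; by Lemma~\ref{inherit}, $H$ inherits property $\T$ from $G$. Next I would verify $\oo^{2'}(H)=H$: since $\oo^{2'}(H)$ is characteristic in $H$, it is normal in $G$, and both $G/H$ and $H/\oo^{2'}(H)$ have odd order, so $G/\oo^{2'}(H)$ has odd order, forcing $H=\oo^{2'}(G)\subseteq\oo^{2'}(H)$ by the minimality in the definition of $\oo^{2'}(G)$. Proposition~\ref{2nilpotent} now applies to $H$ and yields the first assertion: $\oo^{2'}(G)$ is $2$-nilpotent.

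For the second assertion, let $K$ denote the normal $2$-complement of $H$ just obtained. Being a characteristic $2'$-subgroup of $H$, $K$ is normal in $G$ and hence $K\subseteq\oo_{2'}(G)$. Set $\bar G:=G/\oo_{2'}(G)$. The intersection $H\cap\oo_{2'}(G)$ is a normal $2'$-subgroup of $H$ and so is contained in the unique normal Hall $2'$-subgroup $K$; combined with $K\subseteq\oo_{2'}(G)$ this gives $H\cap\oo_{2'}(G)=K$. Consequently $\bar H:=H\oo_{2'}(G)/\oo_{2'}(G)\cong H/K$ is a $2$-group that is normal in $\bar G$.

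The next step is to identify $\bar H$ with $\oo^{2'}(\bar G)$. On the one hand $\bar G/\bar H$ is a quotient of $G/H$, hence of odd order, so $\oo^{2'}(\bar G)\subseteq\bar H$. On the other, the full preimage in $G$ of $\oo^{2'}(\bar G)$ is a normal subgroup $N\supseteq\oo_{2'}(G)$ with $G/N$ of odd order, and the defining property of $\oo^{2'}(G)$ then forces $H\subseteq N$, giving $\bar H\subseteq\oo^{2'}(\bar G)$. Thus $\oo^{2'}(\bar G)=\bar H$ is a normal $2$-subgroup of $\bar G$ with $\bar G/\oo^{2'}(\bar G)$ of odd order, so any Sylow $2$-subgroup of $\bar G$ is contained in $\oo^{2'}(\bar G)$ and, by order considerations, coincides with it. In particular $\bar G=G/\oo_{2'}(G)$ is $2$-closed, as required.

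The genuine content of the theorem is already encapsulated in Proposition~\ref{2nilpotent}, so I do not expect any real obstacle. The only points that require some care are the identification $H\cap\oo_{2'}(G)=K$ and the two-sided comparison $\bar H=\oo^{2'}(\bar G)$; both are elementary once $\oo^{2'}(G)$ is known to be $2$-nilpotent.
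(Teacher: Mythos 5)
Your proof is correct, and its first half---reducing to Proposition~\ref{2nilpotent} applied to $H=\oo^{2'}(G)$ after checking $\oo^{2'}(H)=H$, then pushing the normal $2$-complement $K$ of $H$ into $\oo_{2'}(G)$ via the characteristic-subgroup argument---is exactly what the paper does. The only divergence is the final step. The paper takes $P\in\Syl_2(H)$, notes $P\in\Syl_2(G)$ because $|G/H|$ is odd, and applies the Frattini argument to $H=PK\unlhd G$ to obtain $G=\Norm_G(P)K=\Norm_G(P)\oo_{2'}(G)$, whence the image of $P$ is a normal Sylow $2$-subgroup of $G/\oo_{2'}(G)$. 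You instead observe that $H\oo_{2'}(G)/\oo_{2'}(G)\cong H/(H\cap\oo_{2'}(G))=H/K$ is a normal $2$-subgroup of odd index in $G/\oo_{2'}(G)$ and hence is itself the normal Sylow $2$-subgroup; this dispenses with Frattini at the cost of the small verification $H\cap\oo_{2'}(G)=K$, which you justify correctly. The two finishes are of comparable length and difficulty; yours is marginally more self-contained, and your explicit identification of $H\oo_{2'}(G)/\oo_{2'}(G)$ with $\oo^{2'}(G/\oo_{2'}(G))$ is not actually needed for the conclusion (normality plus odd index already suffices), though it does no harm.
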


\begin{proof}
Suppose that $G$ has property $\T.$ We first observe that $H:=\oo^{2'}(G)$ is a characteristic
subgroup of $G$ and it also has property $\T$ by Lemma~\ref{inherit}. Since $\oo^{2'}(H)=H$ has
property $\T,$ it follows from  Proposition~\ref{2nilpotent} that $H$ is $2$-nilpotent, i.e., it
has a normal $2$-complement $K.$ Since $K$ is characteristic in $H$ and $H$ is characteristic in
$G,$ we deduce that $K$ is characteristic in $G.$ In particular, $K\unlhd G$ and so $K\leq
\oo_{2'}(G).$ Let $P\in\Syl_2(H).$ As $G/H$ is of odd order, we have that $P\in\Syl_2(G)$ and
$H=PK\unlhd G.$ By Frattini's argument, we have that \[G=\Norm_G(P)K=\Norm_G(P)\oo_{2'}(G).\] Hence
$G/\oo_{2'}(G)$ has a normal Sylow $2$-subgroup as wanted.
\end{proof}

The following example shows that we cannot strengthen Proposition~\ref{2nilpotent} and
Theorem~\ref{2closed} by changing the condition that `$G$ has property $\T$' to the weaker
condition `$G$ has property $\WT$'.

\begin{example}
Let $G\cong \textrm{Sym}(4)$ be the symmetric group of degree $4.$ We can see that
$G=\oo^{2'}(G),\oo_{2'}(G)=1$ and the conjugacy class sizes of odd prime power order real elements
in $G$ are $1$ and $2^3.$ Thus $G$ has property $\WT$ but not $\T.$ Furthermore,  $G$ is neither
$2$-closed nor $2$-nilpotent.
\end{example}

\subsection*{Acknowledgment} The author is grateful to the referee for  careful reading of the
manuscript and for many helpful suggestions and corrections. The referee has shortened the proofs
of Propositions \ref{simple} and \ref{2nilpotent}. As a result, the exposition of our paper has
been improved significantly.


\begin{thebibliography}{plain}
\bibitem{Camina} A.R. Camina, {Arithmetical conditions on the conjugacy class numbers of a finite
group,} {\it J. London Math. Soc.,} $(2)$ {\bf5} (1972), 127--132.

\bibitem{CC} A.R. Camina and R.D. Camina, {The influence of conjugacy class sizes on the structure of finite groups: A
survey,} {\it Asian-European J. Math.,} {\bf 4} (2011), 559--588.

\bibitem{Casolo}  C. Casolo, S. Dolfi and E. Jabara, {Finite groups whose noncentral class sizes have the same p-part for some prime p,}
{\it Israel J. Math.,} {\bf192} (2012), 197-–219.

\bibitem{Atlas} J.H. Conway, R.T. Curtis, S.P. Norton, R.A.
Parker and R.A. Wilson, {\it Atlas of Finite Groups,} Oxford University Press, (Eynsham, 1985).

\bibitem{CH} D. Chillag and M. Herzog, {On the length of the conjugacy classes of finite groups,} {\it J. Algebra,} {\bf131} (1990), 110--125.

\bibitem{DNT} S. Dolfi, G. Navarro and P.H. Tiep, {Primes dividing the degrees of the real characters,} {\it Math. Z.,} {\bf259} (2008),  755--774.

\bibitem{DPS} S. Dolfi, E.  Pacifici and L. Sanus,  {Finite groups with real conjugacy classes of prime size,}
{\it Israel J. Math.,} {\bf175} (2010), 179--189.

\bibitem{GAP} The GAP Group, GAP - Groups, Algorithms, and Programming, Version 4.4.10, 2007. \texttt{http://www.gap-system.org.}

\bibitem{Gorenstein} D. Gorenstein, {\it Finite groups,} 2nd edition, Chelsea Publishing Company (New York, 1980).

\bibitem{GNT} R. Guralnick, G. Navarro and P.H. Tiep, {Real class sizes and real character degrees,} {\it Math. Proc. Cambridge Philos.
Soc.,} {\bf150} (2011),  47--71.

\bibitem{MT} A. Moret\'{o} and P.H. Tiep, {Prime divisors of character degrees,} {\it J. Group Theory,} {\bf11} (2008), 341--356.

\bibitem{NST} G. Navarro, L. Sanus and P.H. Tiep, {Real characters and degrees,} {\it Israel J. Math.,} {\bf171} (2009), 157--173.

\bibitem{San} L. Sanus, Real characters and degrees: an overview, in: {\it Ischia group theory 2008},
World Sci. Publ. (Hackensack, NJ, 2009), pp. 243–-247.

\bibitem{TZ} P.H. Tiep and A.E. Zalesski, {Real conjugacy classes in algebraic groups and finite groups of Lie type,} {\it J. Group
Theory,} {\bf8} (2005), no. 3, 291--315.

\bibitem{Zsigmondy} K. Zsigmondy, { Zur Theorie der Potenzreste,}   {\it Monatsh. Math. Phys.,} {\bf3}  (1892),
265--284. (in German)


\end{thebibliography}
\end{document}